\tikzset{snake it/.style={decorate, decoration={snake, amplitude=.3mm,segment length=4mm}}}
\newcommand{\Z}{\mathbb{Z}}
\newcommand{\mc}[1]{\mathcal{#1}}
\renewcommand{\P}{\mathbb{P}}
\newcommand{\E}{\mathbb{E}}
\newcommand{\R}{\mathbb{R}}
\newcommand{\N}{\mathbb{N}} 
\newcommand{\Q}{\mathbb{Q}}
\newcommand{\sss}[1]{\scriptscriptstyle{#1}}
\newcommand{\blank}[1]{}
\newcommand{\vv}{\boldsymbol{1}}
\newcommand{\Pp}{\mathbb{P}_p}
\newcommand{\Ep}{\mathbb{E}_p}
\title{A shape theorem for the orthant model} 
\author{Mark Holmes \& Thomas S. Salisbury}
\newenvironment{customthm}[1]
{\innercustomthm}
{\endinnercustomthm}
\newcounter{thmcounter}
\newtheorem{theorem}[thmcounter]{Theorem}
\newcounter{opencounter}
\newtheorem{open}[opencounter]{Open problem}
\newcounter{lemcounter}
\newtheorem{lemma}[lemcounter]{Lemma}
\newcounter{corcounter}
\newtheorem{corollary}[corcounter]{Corollary}
\newtheorem*{theorem*}{Theorem}
\begin{document}
\maketitle

\begin{abstract}
	We study a particular model of a random medium, called the orthant model, in general dimensions $d\ge 2$.
Each site $x\in \Z^d$ independently has arrows pointing to its positive neighbours $x+e_i$, $i=1,\dots, d$ with probability $p$ and otherwise to its negative neighbours $x-e_i$, $i=1,\dots, d$ (with probability $1-p$).
We prove a shape theorem for the set of sites reachable by following arrows, starting from the origin, when $p$ is large.  The argument uses subadditivity, as would be expected from the shape theorems arising in the study of first passage percolation. The main difficulty to overcome is that the primary objects of study are not stationary, which is a key requirement of the subadditive ergodic theorem.
\end{abstract}

\section{The model and main results}
Fix $d\ge 2$, and set $[d]=\{1,2,\dots, d\}$.   Let $\mc{E}_+=\{e_i\}_{i \in [d]}$ 
denote the set of canonical basis vectors for $\Z^d$ and let $\mc{E}_-=\{-e_i\}_{i \in [d]}$ and $\mc{E}=\mc{E}_+\cup \mc{E}_-$.  Let $o$ denote the origin in $\Z^d$.  Let $\mu$ be a probability measure on the power set of $\mc{E}$.  

Let $(\mc{G}_x)_{x \in \Z^d}$ be i.i.d.~with law $\mu$.  This induces a random directed graph on $\Z^d$ - insert arrows from $x$ to each of the vertices $\{x+e:e\in \mc{G}_x\}$.  These kinds of models are called {\em degenerate random environments} \cite{DRE,DRE2}, and their study is motivated by the fact that they lay the foundation for understanding the behaviour of {\em random walks in non-elliptic random environments} (see  \cite{RWDRE,RWDRE2} for the non-elliptic setting and \cite{Zeit04} for the general theory in the uniformly elliptic setting).  We are interested in the set of vertices $\mc{C}_x\subset \Z^d$ that can be reached from $x$ by following these arrows, as well as the sets $\mc{B}_x=\{y \in \Z^d:x \in \mc{C}_y\}$ and $\mc{M}_x=\mc{C}_x\cap \mc{B}_x$.  

Our goal is to obtain a shape theorem for $\mc{C}_o$, under a particular choice of $\mu$ (to be described below). See Theorem \ref{shapetheorem}. As one would expect, this is proved using the subadditive ergodic theorem. The key technical problem for us to overcome is that the basic object of study (labelled $\beta_n(u)$ below) fails to have the stationarity properties required by the subadditive ergodic theorem, because of the special role of the origin $o$. Our approach is to find a substitute quantity that does have the required stationarity, and then bootstrap our way from that to $\beta_n(u)$ using geometric arguments and estimates based on large deviations and the enumeration of self-avoiding walks. We hope that this approach will be useful in other situations, where the subadditive ergodic theorem does not directly apply. We expand on the relationship between our arguments and earlier work, at the end of Section \ref{sec:subadditive}.

The \emph{orthant model} is an elegant example of this class of models in which $\mu(\{\mc{E}_+\})=p=1-\mu(\{\mc{E}_-\})$.  When $d=2$ this model is dual to oriented site percolation (OTSP) on the triangular lattice \cite{DRE,DRE2}.   This fact has been exploited to prove a shape theorem for $\mc{C}_o$ in 2 dimensions, as well as to give improved estimates of the critical parameter for OTSP.   
The left side of Figure \ref{fig:2d_e1e2} shows an example of $\mc{C}_o$ when $d=2$.  Figure \ref{fig:3d} shows an example of $\mc{C}_o$ when $d=3$.
\begin{figure}
\hspace{-1cm}
\includegraphics[scale=.4]{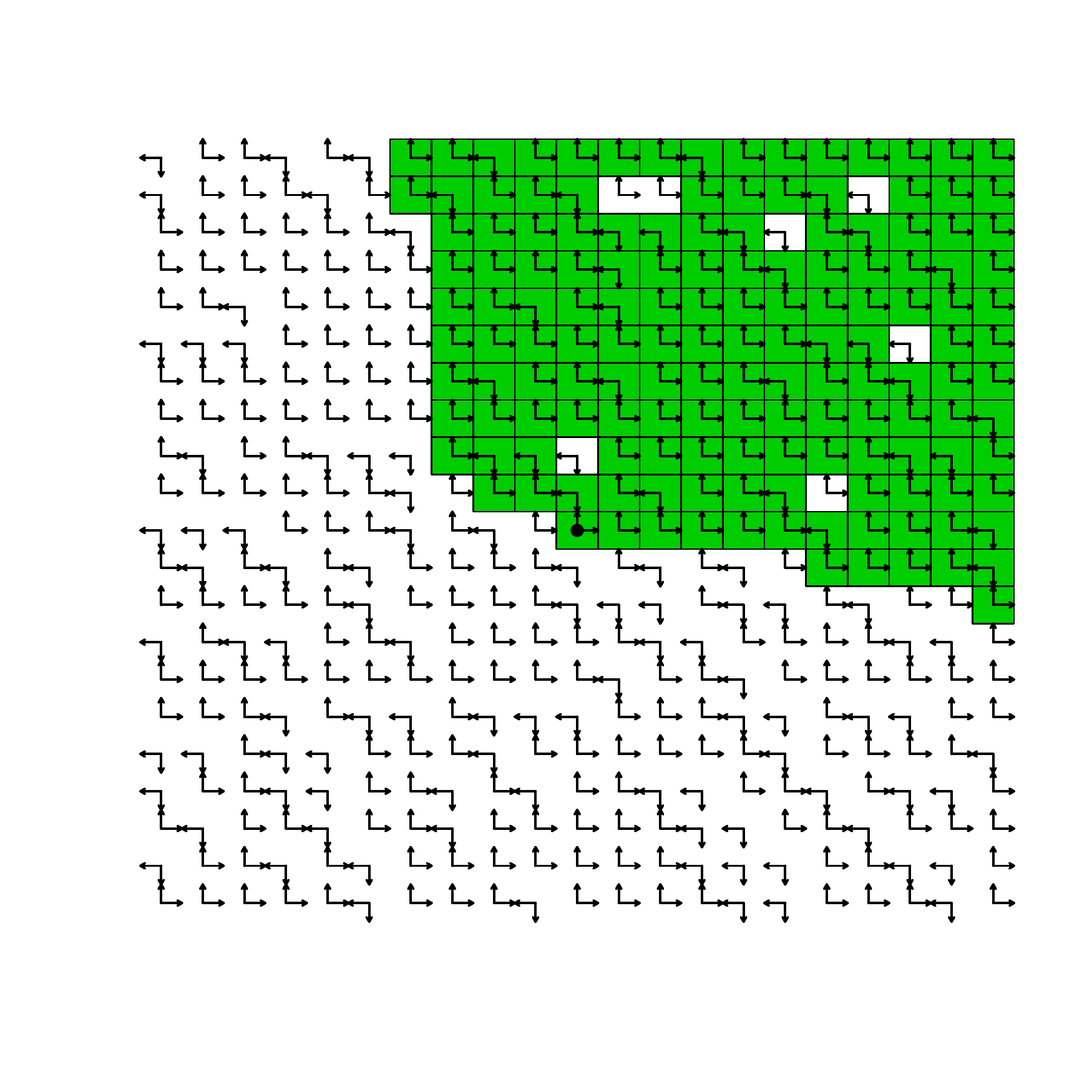}
\includegraphics[scale=.4]{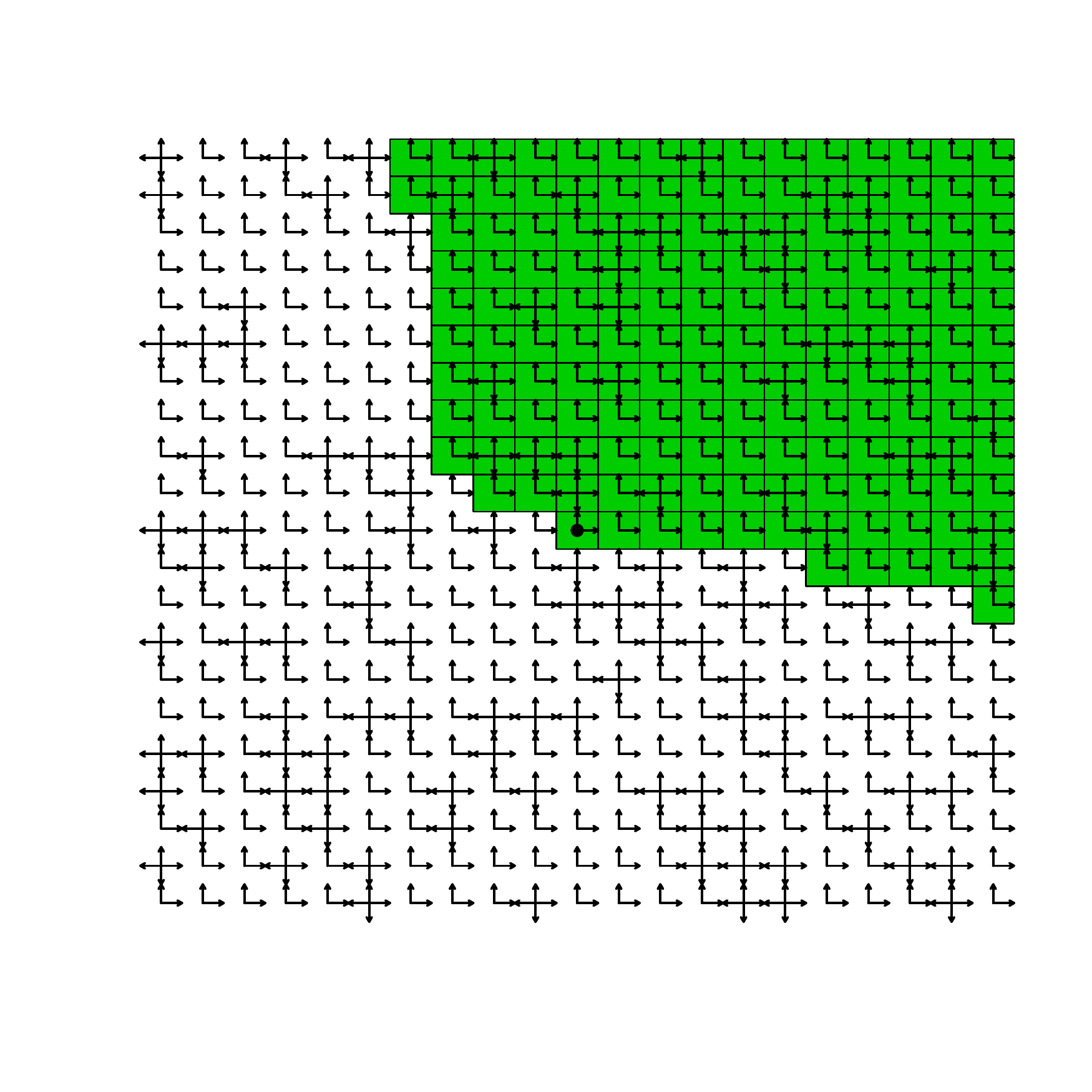}
\vspace{-1.5cm}
\caption{Coupled realisations of finite parts of the set $\mc{C}_o$ for the orthant and half-orthant models with $p=0.7$ and $d=2$. Note that the boundaries of the two shaded clusters are the same (see Theorem \ref{thm:other1}).}
\label{fig:2d_e1e2}
\end{figure}

\begin{figure}
	\begin{tabular}{lcr}
\hspace{-2cm}
\includegraphics[scale=1]{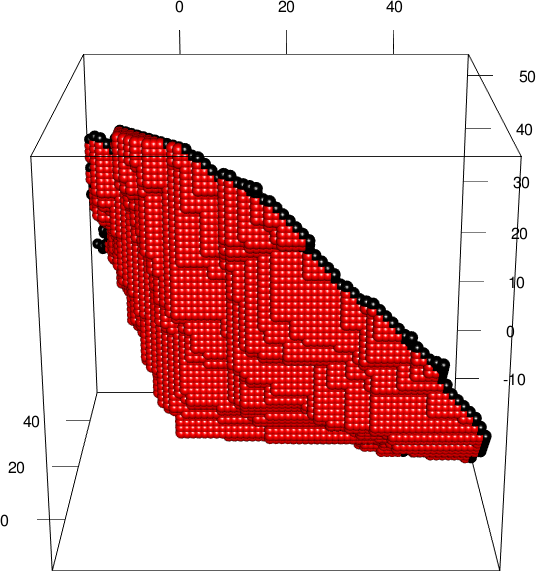} &
\includegraphics[scale=1]{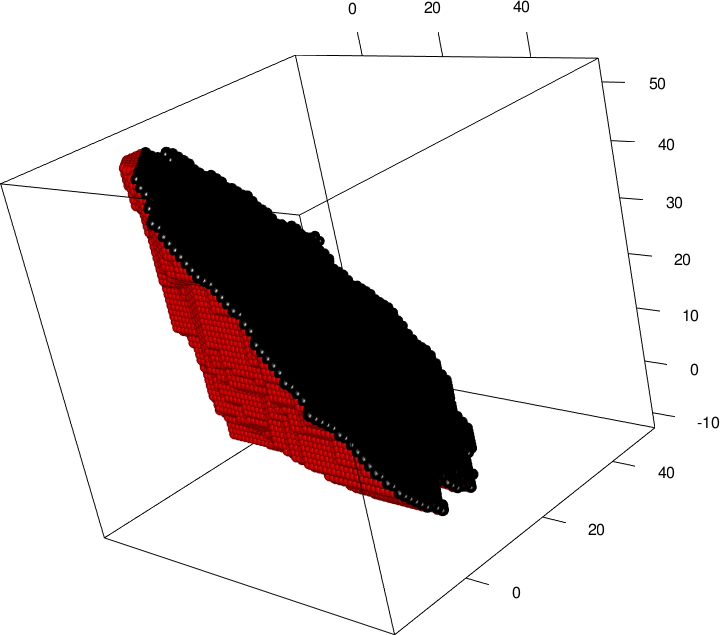} &
\includegraphics[scale=1]{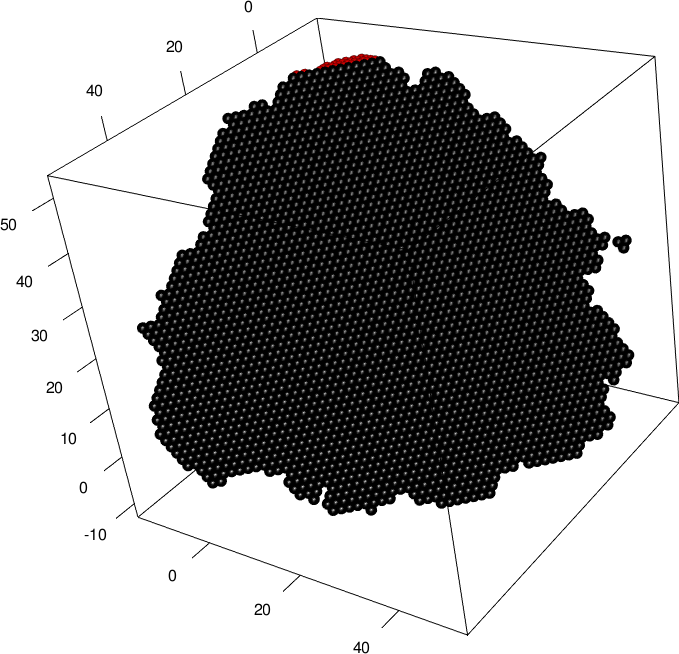}
	\end{tabular}
	\caption{A simulation of part of the cluster $\mc{C}_o(.95)$ for the orthant model, viewed from 3 different angles.  In each case the black/dark vertices are a cross-section where the sum of coordinates is equal to $50$.}
	\label{fig:3d}
\end{figure}
 Let $\Omega_+=\{x\in \Z^d:\mc{G}_x=\mc{E}_+\}$ and $\Omega_-=\Z^d\setminus \Omega_+$.  The orthant model has the property that $\mc{C}_o$ is almost surely infinite (this means that a random walk on the random directed graph $\mc{G}=(\mc{G}_x)_{x \in \Z^d}$ will visit infinitely many sites), since e.g.~it contains an infinite path consisting entirely of $e_1$ steps from $\Omega_+$ sites and $-e_2$ steps from $\Omega_-$ sites.  It also has the property that $\mc{C}_o$ is non-monotone in $p$ under the standard coupling of environments (as in site percolation, see e.g.~\eqref{coupling} below).  Nevertheless it has recently been proved \cite{DREphase} that for each $d\ge 2$ there is a phase transition in the structure of $\mc{C}_o$ as $p$ varies.  In order to state this result, we define the {\em half-orthant model} to be the model with $\mu(\{\mc{E}_+\})=p=1-\mu(\{\mc{E}\})$.  The orthant and half-orthant models can be defined on the same probability space such that $\mc{C}_o(p)\subset \mc{C}^*_o(p)$ for every $p\in [0,1]$ (where the asterisk refers to the half-orthant model) as follows:  Let $(U_x)_{x \in \Z^d}$ be i.i.d.~standard uniform random variables and set
 \begin{equation}
 \mc{G}_x(p)=\mc{E}_+  \iff \mc{G}^*_x(p)=\mc{E}_+ \iff U_x\le p.\label{coupling}
 \end{equation}
For $x\in \Z^{d}$ let $L_x:=\inf\{k\in \Z: x+ke_1\in \mc{C}_o\}$, and $L^*_x:=\inf\{k\in \Z: x+ke_1\in \mc{C}^*_o\}$.   Then trivially $L_x\ge L_x^*$ for every $x$.

For $z\in \Z^d$ we define $z_{\{1+\}}=\{z+ke_1:k\in \Z_+\}$, and for $A\subset \Z^d$
\begin{equation}
A_{\{1+\}}=\bigcup_{z \in A}z_{\{1+\}}.\label{A1+}
\end{equation}
The following result is a special case of a theorem from \cite{DREphase}, and shows that the ``left boundaries'' of $\mc{C}_o$ and $\mc{C}^*_o$ are the same.  Since $\mc{C}^*_o(p)$ is monotone decreasing in $p$, this gives a monotonicity result for the left boundary of $\mc{C}_o(p)$.
\begin{customthm}{$\boldsymbol{L}$}
\label{thm:other1}
Under the coupling \eqref{coupling}, for each  $x\in \Z^{d}$, and $p\in (0,1)$,  $L_x=L^*_x\in [-\infty,\infty)$ and $(\mc{C}_o(p))_{\{1+\}}=\mc{C}_o(p)^*$, a.s.
\end{customthm}
The following result (also a special case of a theorem from \cite{DREphase}) then reveals a phase transition for both models.
\begin{customthm}{$\boldsymbol{ C}$}
\label{thm:other2}
There exists $p_c\in (0,1)$ such that:
\begin{itemize}
\item[] if $p<p_c$ then $\mc{C}^*_o(p)=\Z^d$ almost surely, and 
\item[] if $p>p_c$ then $L_x^*(p)\in \R$ for every $x\in \Z^{d}$ almost surely (so $\mc{C}_o^*(p)\ne \Z^d$).
\end{itemize}
\end{customthm}
Our main result, Theorem \ref{shapetheorem} below, proves a shape theorem for $\mc{C}_o^*$ when $p$ is large.   In view of Theorem \ref{thm:other1} this immediately implies a shape theorem for the boundary of $\mc{C}_o$. 
For a set $H\subset \R^d$, and $s\ge 0$ we let $sH=\{sx:x\in H\}$.  Recall that a cone is any subset $C$ of $\R^d$ such that $sC=C$ for any $s>0$. We will prove that the region $n^{-1}\mc{C}_o^*$ for large $n$ is asymptotically a convex cone.  This cone will have an ``axis of symmetry'' $\vv:=\sum_{e\in\mc{E}_+}e$, so could be described as $\cup_{s\ge 0}s\chi$, where $\chi=C\cap\mc{S}_1$ is a convex subset of $\mc{S}_1:=\{x\in \R^d: x\cdot \vv=1\}$.  We call $\chi$ the \emph{shape} of $C$.

 We will let $\Pp$ denote the law of the half-orthant model with $p$ fixed.    
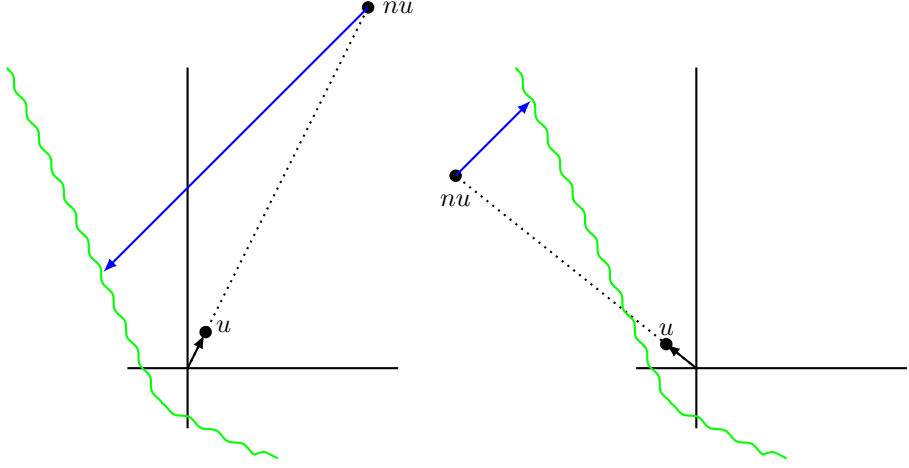
\begin{figure}
	\begin{center}
		\begin{tikzpicture}[thick,scale=.8]
		%	\path[draw=black] (-20,0)--(20,0);
		\path[draw=black] (0,-1)--(0,5);
		\path[draw=black] (-1,0)--(3.5,0);
		\path[->,draw=black,>=latex] (0,0)--(.28,.56);
		\node[circle,fill=black,scale=.5] at (.3,.6) {};
		\node at (.6,.7) {$u$};
		\node  at (3.5,6) {$nu$};
		\path[dotted,draw=black] (0,0)--(3,6);
		\node[circle,fill=black,scale=.5]  at (3,6) {};
		\path [draw=green,snake it]
		(-3,5) -- (-0.5,-0.5) --  (1.5,-1.5);
		\path[->,draw=blue,>=latex] (3,6)--(-1.4,1.6);
		%\node[blue] at (1.3,5)  {$\beta_n(u)\vv$};
		%\draw[draw=blue, snake it] (2,0) arc (0:90:1cm);
		\end{tikzpicture}
		\begin{tikzpicture}[thick,scale=.8]
		%	\path[draw=black] (-20,0)--(20,0);
		\path[draw=black] (0,-1)--(0,5);
		\path[draw=black] (-1,0)--(3.5,0);
		\path[->,draw=black,>=latex] (0,0)--(-.5,.4);
		\node[circle,fill=black,scale=.5] at (-.5,.4) {};
		\node at (-.5,.65) {$u$};
		\node  at (-4,2.8) {$nu$};
		\path[dotted,draw=black] (0,0)--(-4,3.2);
		\node[circle,fill=black,scale=.5]  at (-4,3.2) {};
		\path [draw=green,snake it]
		(-3,5) -- (-0.5,-0.5) --  (1.5,-1.5);
		\path[->,draw=blue,>=latex] (-4,3.2)--(-2.75,4.45);
		%\node[blue] at (1.3,5)  {$\beta_n(u)\vv$};
		%\draw[draw=blue, snake it] (2,0) arc (0:90:1cm);
		\end{tikzpicture}
		\caption{Illustrations of $\beta_n(u)$ in 2 dimensions for two choices of $u$.  The squiggly (green) line is the boundary of $\mc{C}_o$.  The (blue) line from $nu$ to the squiggly line is $\beta_n(u)\vv$.  In the first case $\beta_n(u)<0$, while in the second case $\beta_n(u)>0$.}
		\label{fig:beta}
	\end{center}
\end{figure}	
Let $\vv=\sum_{e\in\mc{E}_+}e$.  For $u\in\mathbb{Z}^d$ and $n \in \N$, let (see Figure \ref{fig:beta})
$$
\beta_n(u)=\inf\{k\in\mathbb{Z}: k\vv+nu\in\mc{C}_o\}.
$$

Let $O_r=\{x\in \Z^d:\|x\|_{\infty}\le r\}$. 
The following is our main result.
\begin{theorem}
\label{shapetheorem}
Fix $d\ge 2$.  For the half-orthant model, there is a $p_1<1$ such that the following hold for $p>p_1$.
\begin{enumerate} 
\item[\emph{(a)}] For $u\in \mathbb{Z}^{d}$ there is a deterministic $\gamma(u)$ such that  
$\frac{\beta_n(u)}{n}\to\gamma(u)$, $\Pp$-a.s.;
\item[\emph{(b)}] $\gamma(u+w)\le \gamma(u)+\gamma(w)$;  $\gamma(ru)=r\gamma(u)$; $\gamma(u+r\vv)=\gamma(u)-r$ for $r\in\mathbb{Z}_+$ and $u,w\in \mathbb{Z}^{d}$; $\gamma$ is symmetric under permutation of coordinates; $\gamma(u)\ge 0$ if $u\cdot\vv\le 0$; $\gamma(u)\le 0$ if $u$ lies in the positive orthant.
\item[\emph{(c)}] $\gamma$ extends to be a Lipschitz map $\mathbb{R}^{d}\to\mathbb{R}$ with these same properties but  for $r\in[0,\infty)$ and $u,w\in\mathbb{R}^d$. 
\item[\emph{(d)}] the set $C:=\{z\in\mathbb{R}^d: \gamma(z)\le 0\}$ is a closed convex cone, which is symmetric under permutations of the coordinates, contains the positive orthant, and is contained in the half-space $\{z: z\cdot\vv\ge 0\}$.  
\item[\emph{(e)}] $\frac1n\mc{C}_o\to C$ in the sense that for every $\epsilon>0$ and every $r<\infty$, the following holds $\Pp$-a.s. for sufficiently large (random) $n$:
$$
(O_r\cap \frac1n\mc{C}_o )\subset O_\epsilon+C
\quad\text{and}\quad
O_r\cap C\subset O_\epsilon+ \frac1n\mc{C}_o.
$$
\end{enumerate}
\end{theorem}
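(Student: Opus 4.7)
The plan is to follow the standard shape-theorem strategy via Kingman's subadditive ergodic theorem, while sidestepping the non-stationarity of $\beta_n(u)$ (due to the distinguished role of $o$) by introducing a stationary surrogate and then bootstrapping back. The overall flow is: stationary surrogate $\Rightarrow$ Kingman $\Rightarrow$ limit $\gamma$ on the surrogate $\Rightarrow$ transfer to $\beta_n(u)$ by large deviations $\Rightarrow$ algebraic properties of $\gamma$ $\Rightarrow$ set convergence.

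First I would introduce a two-parameter family $\tilde\beta_{m,n}(u)$, $m\le n$, in which the origin in the definition of $\beta$ is replaced by a spatially homogeneous object at $mu$ (say the forward cluster of a large shift-equivariant seed at $mu$, built measurably from the environment). Three things are then to be verified for Kingman's theorem: (i) stationarity, $(\tilde\beta_{m+j,n+j}(u))_{m\le n} \stackrel{d}{=} (\tilde\beta_{m,n}(u))_{m\le n}$ under the translation of the environment by $-ju$; (ii) subadditivity $\tilde\beta_{m,n}(u)\le \tilde\beta_{m,k}(u)+\tilde\beta_{k,n}(u)$, obtained by concatenating directed paths through the intermediate seed; (iii) integrability of $\tilde\beta_{0,1}(u)$, via a Peierls-type estimate that enumerates self-avoiding walks and uses that $p$ is close to $1$. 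Kingman's theorem then yields a deterministic $\gamma(u)$ with $\tilde\beta_{0,n}(u)/n \to \gamma(u)$ $\Pp$-a.s.

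The key step, and the main obstacle, is the bootstrap $\beta_n(u)/n\to \gamma(u)$. One inequality follows from the containment of $\mc{C}_o$ in a suitably enlarged version of the surrogate cluster (or vice-versa, depending on the direction). The harder inequality asserts that the origin itself is, with high probability, close to the bulk of the surrogate cluster, so that the detour from $o$ to this cluster has cost $O(1)$ and becomes negligible after division by $n$. The quantitative content is a large-deviation estimate, uniform in $u$ over a compact set of directions, again controlled by self-avoiding walk enumeration: the probability that every path in $\mc{G}$ leaving $o$ must traverse a long "bad" region separating $o$ from the bulk is summable in the size of that region.

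Given (a), the properties in (b) follow by passing subadditivity, positive-integer homogeneity, and the identity $\beta_n(u+r\vv)=\beta_n(u)-r$ to the limit; permutation symmetry comes from that of $\mu$; the sign conditions are consequences of the same large-deviation bound (for $u$ in the positive orthant, $nu\in\mc{C}_o$ for all large $n$, hence $\gamma(u)\le 0$) together with a trivial bound (for $u\cdot\vv\le 0$, the coordinate sum forces $\beta_n(u)\ge -nu\cdot\vv/d$). Subadditivity plus finiteness of $\gamma(\pm e_i)$ upgrades $\gamma$ to a Lipschitz function on $\R^d$ as in (c), and (d) is a direct rewriting of these properties as geometric properties of $C=\{\gamma\le 0\}$. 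Part (e) finally follows from (a) by a standard finite-$\epsilon$-net argument on $O_r$: apply (a) simultaneously along a countable dense subset of rational directions, interpolate via Lipschitz continuity of $\gamma$, and translate the pointwise statement $\beta_n(v)/n\to\gamma(v)$ into the two containments defining the set convergence.
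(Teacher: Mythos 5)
Your overall strategy (stationarize by a surrogate, apply Kingman, bootstrap to $\beta_n$ via large deviations, then derive (b)--(e)) matches the paper's at a high level, but the two crucial steps are either left unspecified or misidentified, and there is a third gap.

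First, the surrogate. You propose ``the forward cluster of a large shift-equivariant seed at $mu$,'' but you do not say what this seed is, and the difficulty is precisely that any natural candidate fails one of the three needed properties simultaneously: stationarity in $m$, subadditivity, and comparability with $\beta_n(u)$. The paper's actual surrogate is the slab-restricted quantity $B_n(u,v)=\inf\{k: k\vv+nu\in\mc{C}_o[\Lambda_{u,v}(0,n)]\}$, where $\Lambda_{u,v}(m,n)$ is a slab orthogonal to an auxiliary vector $v$ with $u\cdot v>0$, $v\cdot\vv=0$. Subadditivity comes from concatenating paths in disjoint slabs; stationarity comes from independence of disjoint slabs together with invariance under shifts by $\vv$, so that $B_{m,n}(u,v)$ (started from the endpoint $B_m(u,v)\vv+mu$) is independent of $B_m(u,v)$ and has the law of $B_{n-m}(u,v)$. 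The deterministic relation $\beta_n(u)\le B_n(u,v)$ then gives one inequality for free. None of this is implied by your description, and without the slab geometry you have not actually produced a quantity to which Kingman applies.

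Second, the bootstrap. You describe the hard direction as ``the origin is close to the bulk of the surrogate cluster, so the detour costs $O(1)$.'' That is the wrong obstruction given the paper's setup. Since $\beta_n\le B_n$ automatically, the easy direction is $\limsup\beta_n/n\le\gamma(u)$; the hard direction is $\liminf\beta_n/n\ge\gamma(u)$, i.e.\ showing that paths from $o$ that make long excursions outside $\Lambda_{u,v}(0,n)$ cannot profitably reach far below $n\gamma(u)\vv+nu$. The paper handles this with a Borel--Cantelli argument on events $A'_n(\lfloor cn\rfloor)$, $A''_n(\lfloor cn\rfloor)$, $\hat A_n$ (excursions of order $cn$ to the left, to the right, or back-and-return), via SAW enumeration, and requires a two-step interpolation through $\beta^0_n$ and $\beta^1_n$, the second of which needs a separate large-deviation estimate on $B_n$ (Lemma~\ref{lemma:exponentialfirstbound}) and a modified version of Liggett's argument because the strict hypothesis (1.8) of \cite{Liggett} fails. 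Your proposal gestures at SAW/Peierls estimates, but the argument you sketch (``detour cost $O(1)$'') would not establish the $\liminf$ bound.

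Third, subadditivity of $\gamma$. You assert it follows ``by passing subadditivity \dots to the limit,'' but $\beta_n(u)$ is \emph{not} subadditive in $u$, which is exactly why the surrogate is needed. Even with $B_n(u,v)$, subadditivity $\gamma(u+w)\le\gamma(u)+\gamma(w)$ only follows directly when a common $v$ exists with $u\cdot v>0$ and $w\cdot v>0$. The paper must separately treat the case $w=-u$ (using $\hat A_n$ and a contradiction argument) and then reduce the general case to these via an algebraic decomposition $u=s\vv+u'$, $w=t\vv+w'$ with $u',w'$ antiparallel. This is a genuine argument, not a passage to the limit.
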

When $p=1$, the shape $\chi$ of $C$ is the simplex $\{x\in \R_+^d:x\cdot \vv=1\}$, and if also $d=3$ then $\chi$ is a (filled) triangle.  As we decrease $p$ the triangle becomes rounded.  See for example the third picture in Figure \ref{fig:3d}.
Note that (e) implies convergence in the pointed Gromov-Hausdorff metric. See \cite{ADH}. 
% in both cases the basepoint is $o$ and the distance is Euclidean. The $\epsilon$-relation takes points in $\frac1n \mc{C}_o^n$ (for $n<\epsilon/2$) and associates them with points in $C$ that lie within distance $\epsilon/2$.

As remarked earlier, we will analyze $\beta_n(u)$ using subadditivity. The special role of the origin creates a lack of stationarity, so these subadditivity arguments are far from routine. Circumventing this obstacle is the main technical contribution of this paper. In doing so, we will rely on the exponential decay of certain probabilities, which is the main reason why we only have a proof for large $p$, rather than for all $p>p_c$. In other words, the following remains open:
\begin{open}
Prove that Theorem \ref{shapetheorem} holds for all $p>p_c$.
\end{open}

Note that \cite{Dur84} uses a block argument to go from large $p$ to all $p>p_c$, in the setting of oriented percolation in two dimensions. It would be interesting to find some analogue to this approach, in the context of Theorem \ref{shapetheorem}. 

According to \cite{DREphase} there are phase transitions for more general models of degenerate random environments.  We introduced $L_x$ above, which is analogous to $\beta_1(u)$ except that $e_1$ plays the role of $\vv$. Under Condition 2 of \cite{DREphase}, one could attempt to formulate shape theorems for such general models, in which case it may be more natural to use the $L_x$'s. We will therefore record the following variation of (a) of Theorem \ref{shapetheorem} (still in the setting of the half-orthant model). Let $Z$ denote the discrete hyperplane $\{y\in\Z^d: y\cdot e_1=0\}$. 
\begin{corollary}
\label{cor:Lxlimits}
Assume the conditions of Theorem \ref{shapetheorem}, and let $p>p_1$. For each $v\in Z$ there exists a deterministic $\zeta(v)\in \R$  such that 
	\begin{equation}
	n^{-1}L_{nv}\to \zeta(v), \quad \Pp-\text{almost surely as }n \to \infty.
	\label{eqn:openWshape}
	\end{equation}
\end{corollary}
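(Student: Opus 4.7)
The natural candidate for the limit is
\[
\zeta(v):=\inf\{t\in\R:v+te_1\in C\},
\]
with $C$ the limit cone produced by Theorem \ref{shapetheorem}(d). Since $C\subseteq\{z:z\cdot\vv\ge 0\}$, any $v+te_1\in C$ forces $t\ge -v\cdot\vv$, giving $\zeta(v)\ge -v\cdot\vv>-\infty$ immediately. For $\zeta(v)<\infty$, one needs $e_1$ to lie in the interior of $C$ as a subset of $\R^d$: once this is known, the cone/convexity properties of $C$ imply that $v+te_1=t(e_1+v/t)\in C$ for all sufficiently large $t$. This interiority is not stated in (d), but it should follow for $p\in(p_1,1)$ after possibly enlarging $p_1$: the positive density $1-p$ of sites that carry all $2d$ arrows forces $\mc{C}_o$ to spread (slightly) beyond the positive orthant, and the coordinate-permutation symmetry of $C$ then yields a neighborhood of each $e_i$ in $C$.

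The lower bound $\liminf_n n^{-1}L_{nv}\ge\zeta(v)$ is a direct consequence of Theorem \ref{shapetheorem}(e). The lattice point $nv+L_{nv}e_1\in\mc{C}_o$ gives $v+(L_{nv}/n)e_1\in n^{-1}\mc{C}_o$. A priori control on $L_{nv}/n$ from below, obtained from the same self-avoiding-walk and large-deviation estimates used in the proof of Theorem \ref{shapetheorem} (each $-e_1$ step along a path from $o$ requires an $\Omega_-$ site, of density $1-p$), keeps $L_{nv}/n$ uniformly bounded, so (e) applies on a fixed $O_r$ and forces any subsequential limit $s$ of $L_{nv}/n$ into every $\epsilon$-neighborhood of $C$. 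Closedness of $C$ then yields $v+se_1\in C$, so $s\ge\zeta(v)$.

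For the upper bound $\limsup_n n^{-1}L_{nv}\le\zeta(v)$ I would exploit a feature specific to the half-orthant model: every site has arrows to each of $e_1,\dots,e_d$, so $\mc{C}_o$ is closed under addition of an arbitrary element of $\mathbb{Z}_+^d$, not just of $\mathbb{Z}_+e_1$. Fix $t>\zeta(v)$. Because $e_1\in\mathrm{int}(C)$ and $C$ is a convex cone, $v+te_1\in\mathrm{int}(C)$, and hence the transversely shifted target $z':=v+te_1-\delta\sum_{i\ne 1}e_i$ is also in $\mathrm{int}(C)$ for all sufficiently small $\delta>0$. Applying (e) with $\epsilon'<\delta$ produces, for all large $n$, a lattice point $y=nv+q\in\mc{C}_o$ with $|y/n-z'|<\epsilon'$; this forces $q_i<0$ for every $i\ne 1$ and $q_1\le n(t+\epsilon')$. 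The transversely shifted point
\[
y+\sum_{i\ne 1}(-q_i)e_i
\]
lies on the column $nv+\mathbb{Z}e_1$, is in $\mc{C}_o$ by the half-orthant closure, and has $e_1$-coordinate $q_1$. Hence $L_{nv}\le n(t+\epsilon')$, and sending $\epsilon'\to 0$ and then $t\downarrow\zeta(v)$ completes the proof.

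The main obstacle is the interiority claim $e_1\in\mathrm{int}(C)$, which is what allows $\zeta(v)\in\R$ in the first place and also produces the perturbed target $z'$ used in the upper bound; I expect this to require either a separate small argument or a mild strengthening of $p_1$. Once that is in hand, the remaining ingredients — the a priori lower bound on $L_{nv}/n$, the Hausdorff-type convergence in (e), and the rounding of a nearby cluster point onto the column $nv+\mathbb{Z}e_1$ via $\mathbb{Z}_+^d$-closure — are routine.
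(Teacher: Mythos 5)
Your overall route coincides with the paper's: both define $\zeta(v)$ as the infimum of $t$ with $v+te_1\in C$, both obtain the $\liminf$ bound from Theorem~\ref{shapetheorem}(e) combined with closedness of $C$ and the a priori control supplied by Lemma~\ref{SAWargument}, and both obtain the $\limsup$ bound by perturbing the target into the interior of $C$, applying (e), and then pushing back onto the column $nv+\Z e_1$ using the fact that $\mc C_o+\Z_+^d=\mc C_o$ for the half-orthant model. The paper packages this last step as Corollary~\ref{cor:finestructure} (whose proof shifts by $-\epsilon\vv$ rather than your $-\delta\sum_{i\neq 1}e_i$, then corrects with a positive-orthant step), and invokes that corollary in the contrapositive, whereas you inline the same geometric idea. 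Neither version is stronger than the other; the corollary formulation simply isolates the ``fine structure'' statement for reuse.

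The one gap you flag --- that $e_1$ must lie in the interior of $C$, equivalently $\gamma(e_1)<0$, in order for $\zeta(v)$ to be finite and for the perturbed target to stay in $\mathrm{int}(C)$ --- is a genuine one, and the paper proves it as Lemma~\ref{lem:e1strictnegativity}. Note however that no further enlargement of $p_1$ is required: the lemma is established for the same $p>p_1$ as Theorem~\ref{shapetheorem}. Your heuristic (a positive density of $\Omega_-$ sites, carrying all arrows, lets $\mc C_o$ spill below the positive orthant) is the right intuition; the actual argument constructs an explicit path that uses $e_1$ at $\mc E_+$ sites and distributes $-e_2,\dots,-e_d$ steps among the first $\Omega_-$ sites encountered, then invokes the law of large numbers to reach $n(ke_1-\vv)$ with probability $\to 1$ once $k$ is large enough relative to $1/(1-p)$. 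This yields $\gamma(ke_1-\vv)\le 0$, hence $\gamma(e_1)\le -1/k<0$. Strict negativity of $\gamma(e_1)$ also gives strict monotonicity of $t\mapsto\gamma(v+te_1)$, which is what makes the infimum defining $\zeta(v)$ a genuine crossing point rather than the left endpoint of a flat interval --- a point worth making explicit, since without it the $\limsup$ and $\liminf$ could in principle straddle a plateau of $\gamma$.

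One more minor remark: in your $\limsup$ argument you quietly use that for each fixed $\epsilon'$ the conclusion $\limsup_n L_{nv}/n\le t+\epsilon'$ holds almost surely, and then send $\epsilon'\to 0$. Since $\limsup_n L_{nv}/n$ is a single random variable, intersecting over a countable sequence $\epsilon'\downarrow 0$ is fine, but this should be said. The paper sidesteps the issue by truncating with $S\wedge M$ and showing $\gamma(w+(S\wedge M)e_1)\ge -\delta$ for every $\delta>0$.
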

\begin{open}
Prove a version of \eqref{eqn:openWshape} (or of Theorem \ref{shapetheorem}) for more general degenerate random environments e.g.  assuming Condition 2 of \cite{DREphase}.  
\end{open}

A comparison with the shape theorems of first passage percolation 
might suggest that the convergence in (e) of Theorem \ref{shapetheorem} (in which $\epsilon$ is fixed) fails to capture the fine structure of $\frac1n\mc{C}_o$. 
The following is intended to show that this is not actually the case. For $\delta>0$, let $C_\delta=\{z\in\R^d: \gamma(z)\le -\delta\}$. 
\begin{corollary}
\label{cor:finestructure} 
Assume the conditions of Theorem \ref{shapetheorem}, and let $p>p_1$. Then for every $\delta>0$ and every $R<\infty$, the following holds $\Pp$-a.s. for sufficiently large (random) $n$:
$$
O_R\cap C_\delta\cap\frac1n\mathbb{Z}^d\subset\frac1n\mc{C}_o.
$$
\end{corollary}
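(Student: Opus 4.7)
The plan is to deduce this strengthened inclusion from Theorem~\ref{shapetheorem}(e) by exploiting a simple monotonicity of $\mc{C}_o$ that is specific to the half-orthant model. At every vertex $x$ one has $\mc{G}_x\in\{\mc{E}_+,\mc{E}\}$, so every positive basis vector $+e_i$ is an outgoing arrow from $x$. Iterating, if $w\in\mc{C}_o$ and $a\in\mathbb{Z}_+^d$ then $w+a\in\mc{C}_o$. Consequently, to establish $z\in\frac1n\mc{C}_o$ for a given $z\in\frac1n\mathbb{Z}^d$, it suffices to exhibit some $y\in\frac1n\mc{C}_o$ with $y\le z$ in the coordinatewise order.

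Fix $\delta>0$ and $R<\infty$, and set $\epsilon=\delta/2$ and $r=R+\epsilon$. I would first apply Theorem~\ref{shapetheorem}(e) with these parameters to obtain a random threshold $n_0$ such that, for every $n\ge n_0$, every point of $O_r\cap C$ lies within sup-norm $\epsilon$ of some point of $\frac1n\mc{C}_o$. Given $z\in O_R\cap C_\delta\cap\frac1n\mathbb{Z}^d$, the key step is to slide $z$ down along $\vv$: set $z':=z-\epsilon\vv$. Then $\|z'\|_\infty\le R+\epsilon=r$, and using the identity $\gamma(u+s\vv)=\gamma(u)-s$ from part~(b), $\gamma(z')=\gamma(z)+\epsilon\le-\delta+\epsilon<0$, so $z'\in O_r\cap C$. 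The shape theorem then produces $y\in\frac1n\mc{C}_o$ with $\|z'-y\|_\infty\le\epsilon$, and coordinatewise this reads $y_i\le z'_i+\epsilon=z_i$, i.e.~$y\le z$. Since $ny$ and $nz$ both lie in $\mathbb{Z}^d$, we have $nz-ny\in\mathbb{Z}_+^d$, and the monotonicity observation from the previous paragraph yields $nz\in\mc{C}_o$, as required.

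There is no real obstacle once Theorem~\ref{shapetheorem}(e) is in hand: the only ingredients are the geometric observation that strict interior points of $C$ can be translated downwards by a small multiple of $\vv$ while remaining in $C$, and the coordinatewise monotonicity of $\mc{C}_o$ in the half-orthant model. Note that the random threshold $n_0$ is automatically uniform over $z\in O_R\cap C_\delta\cap\frac1n\mathbb{Z}^d$ because Theorem~\ref{shapetheorem}(e) delivers a set inclusion, not a pointwise statement, so no additional union bound or uniformity argument is needed.
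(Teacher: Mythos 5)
Your proof is correct and follows essentially the same argument as the paper: slide $z$ down to $z'=z-\epsilon\vv$, apply part~(e) to find a nearby $y\in\frac1n\mc{C}_o$, and exploit the half-orthant monotonicity that $\mc{C}_o$ is closed under adding elements of $\Z_+^d$. The only cosmetic difference is that you use the exact identity $\gamma(u+s\vv)=\gamma(u)-s$ where the paper invokes the Lipschitz bound with $\epsilon=\delta/d\wedge 1$; your choice $\epsilon=\delta/2$ works equally well.
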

In other words, $\frac1n\mc{C}_o$ fills out the available lattice points of $C$, away from the boundary of $C$.

All of the above results concern the forward cluster $\mc{C}_o$.  A crucial difference between forward and backward clusters is that $\mc{B}_o$ can be finite for the orthant model.  For example, if $e_i\in \Omega_+$ and $-e_i\in \Omega_-$ for each $i\in [d]$ (this has positive probability for any $p \in (0,1)$) then there are no arrows pointing to the origin, so $\mc{B}_o=\{o\}$.  For the half-orthant model $\mc{B}^*_o$ will be infinite, since it contains $-\mathbb{Z}_+e_1$. 

For $x\in \Z^{d}$, let $R_x(p)=\sup\{k \in \Z:x+ke_1\in \mc{B}_o(p)\}$, and $R^*_x(p)=\sup\{k \in \Z:x+ke_1\in \mc{B}^*_o(p)\}$.  The following theorem is proved in \cite{DREphase}.
\begin{theorem}
\label{thm:B}
For the half-orthant model, let $p_c$ be as in Theorem \ref{thm:other2}.  Then
\begin{itemize}
\item[] if $p<p_c$ then $\mc{B}_o=\Z^d$ almost surely, and 
\item[] if $p>p_c$ then $R_x$ is finite for every $x\in \Z^{d}$.
\end{itemize}
\end{theorem}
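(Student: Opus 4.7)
The plan is to reduce both claims to Theorem~C via translation invariance of the environment, with an exponential decay estimate bridging the gap in the harder direction.

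For part~(i) ($p<p_c$), fix $y\in\Z^d$. A directed path from $y$ to $o$ in the environment $\mc{G}$ corresponds to a directed path from $o$ to $-y$ in the shifted environment $\tau_{-y}\mc{G}$ defined by $(\tau_{-y}\mc{G})_x=\mc{G}_{x+y}$, which has the same law as $\mc{G}$ by translation invariance. Hence $\Pp(y\in\mc{B}_o)=\Pp(-y\in\mc{C}_o^*)$. Theorem~C gives $\mc{C}_o^*=\Z^d$ almost surely when $p<p_c$, so $\Pp(y\in\mc{B}_o)=1$ for every fixed $y$, and a countable intersection over $y\in\Z^d$ yields $\mc{B}_o=\Z^d$ almost surely.

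For part~(ii) ($p>p_c$), fix $x\in\Z^d$ and apply the same marginal equivalence to obtain
\[
\Pp(x+ke_1\in\mc{B}_o)=\Pp(-x-ke_1\in\mc{C}_o^*)=\Pp(L^*_{y'}\leq -x_1-k),
\]
where $y'=(0,-x_2,\dots,-x_d)$. I would then try to establish an exponential tail bound $\Pp(L^*_{y'}\leq -m)\leq Ce^{-cm}$ for some $c>0$, which would make the sum $\sum_k\Pp(x+ke_1\in\mc{B}_o)$ finite; the first Borel--Cantelli lemma would then give $R_x<\infty$ almost surely. In the large-$p$ regime such a tail bound follows from a direct path-counting argument: any directed path from $o$ to a point with first coordinate $\leq -m$ must include at least $m$ steps in the $-e_1$ direction, each originating at a site $z$ with $\mc{G}_z=\mc{E}$ (probability $1-p$), and the number of nearest-neighbour paths of length $\ell$ is at most $(2d)^\ell$, so a union bound gives geometric decay in $m$ whenever $p$ is sufficiently close to~$1$.

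The main obstacle will be extending this exponential decay estimate from the large-$p$ regime to all $p>p_c$, since the crude union bound above fails in the intermediate regime $p_c<p\leq p_1$. Overcoming this appears to require more refined techniques specific to the half-orthant model, promoting the mere finiteness of $L^*$ (guaranteed by Theorem~C alone) into exponential tail decay; this is presumably where the main work in \cite{DREphase} is concentrated.
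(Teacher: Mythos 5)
The paper itself does not prove this statement; it is quoted from \cite{DREphase}, so there is no internal proof to compare against and your proposal must stand on its own. Part (i) does: the reversal/translation-invariance identity $\Pp(y\in\mc{B}^*_o)=\Pp(o\in\mc{C}^*_y)=\Pp(-y\in\mc{C}^*_o)$, Theorem \ref{thm:other2}, and a countable intersection over $y$ give $\mc{B}^*_o=\Z^d$ a.s.\ for $p<p_c$.

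Part (ii) has a genuine gap, which you acknowledge: your route needs an exponential tail bound for $L^*$, which your path-counting argument can only deliver for $p$ close to $1$, so nothing is proved in the regime $p_c<p\le p_1$ even though the theorem is asserted for all $p>p_c$. (Moreover, the union bound as you state it diverges --- summing $(2d)^\ell(1-p)^m$ over $\ell\ge m$ --- and would need the finer accounting of Lemma \ref{SAWargument}, where the number of $\Omega_-$-sites grows linearly in the excess length of the path.) The missing observation is that no tail estimate is needed at all. In the half-orthant model every site carries an $e_1$ arrow, so $y\in\mc{B}^*_o$ implies $y-e_1\in\mc{B}^*_o$; hence $\{k\in\Z:\ x+ke_1\in\mc{B}^*_o\}$ is an interval unbounded to the left, the events $\{x+ke_1\in\mc{B}^*_o\}$ decrease in $k$, and $\{R^*_x=\infty\}=\bigcap_{k\ge0}\{x+ke_1\in\mc{B}^*_o\}$. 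Combining this with your own identity $\Pp(x+ke_1\in\mc{B}^*_o)=\Pp(L^*_{y'}\le -x\cdot e_1-k)$, where $y'=-x+(x\cdot e_1)e_1$, monotone convergence gives $\Pp(R^*_x=\infty)=\lim_{k\to\infty}\Pp(L^*_{y'}\le -x\cdot e_1-k)=\Pp(L^*_{y'}=-\infty)=0$ for every $p>p_c$, directly from Theorem \ref{thm:other2} --- no Borel--Cantelli, no exponential decay, and valid throughout the supercritical regime. Since $\mc{B}_o\subset\mc{B}^*_o$ under the coupling \eqref{coupling}, also $R_x\le R^*_x<\infty$, which covers the orthant cluster as well.
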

%We conjecture that $\mc{B}_o=\mathbb{Z}^d$ when $p=p_c$ as well. 
The proof of Theorem \ref{shapetheorem} can be adapted to work for the cluster $\mc{B}_o$, with $\hat\beta_n(u)=\sup\{k\in\mathbb{Z}: k\vv+nu\in\mc{B}_o\}$.
\begin{theorem}
	\label{shapetheoremB}
	Fix $d\ge 2$.  For the half-orthant model, 
	%model with $\mu(\mc{E}_+)=p=1-\mu(\mc{E})$, 
	with $p_1<1$ and $\gamma, C$ as in Theorem \ref{shapetheorem}, the following hold for $p>p_1$, $\Pp$-a.s.
	\begin{enumerate} 
		\item[\emph{(a)}] For $u\in \mathbb{Z}^{d}$, 
		$\frac{\hat{\beta}_n(u)}{n}\to\ -\gamma(-u)$;
		%\todo{Maaak: although this is true, how do we know that we get $-\gamma(-u)$?  e.g. it doesn't look obvious from $x \in \mc{B}_o \iff o \in \mc{C}_x$} \M{Tom: I don't know how obvious it is. The idea is that (loosely speaking) $n(-u+\gamma(-u)\vv)\in\mc{C}_o$, so by translation $o\in\mc{C}_{n(u-\gamma(-u)\vv)}$, so $n(u-\gamma(-u)\vv)\in\mc{B}_o$.}
		\item[\emph{(e)}] $\frac1n\mc{B}_o\to -C$ in the sense that for every $\epsilon>0$ and every $r<\infty$, the following holds for sufficiently large (random) $n$:
		$$
		(O_r\cap \frac1n\mc{B}_o )\subset O_\epsilon+-C
		\quad\text{and}\quad
		O_r\cap -C\subset O_\epsilon+ \frac1n\mc{B}_o.
		$$
	\end{enumerate}
\end{theorem}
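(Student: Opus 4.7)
My plan is to adapt the proof of Theorem \ref{shapetheorem} to the backward cluster, with $\mc{B}_o$ and $\hat\beta_n$ replacing $\mc{C}_o$ and $\beta_n$. Concatenating backward paths yields a super-additivity for $\hat\beta_n$ dual to the subadditivity of $\beta_n$: a backward path from $k_1\vv + nu$ to $o$, glued to a suitably shifted backward path from $k_2\vv+(n+m)u$ to $k_1\vv+nu$, shows $\hat\beta_{n+m}(u) \ge \hat\beta_n(u) + \hat\beta_m(u)$ modulo corrections. The same stationarity obstruction as in the forward case reappears because the origin still plays a distinguished role in $\mc{B}_o$, and I would handle it by imitating the substitute-proxy-and-bootstrap strategy of Theorem \ref{shapetheorem}, reusing the same large-deviation bounds and self-avoiding walk enumeration estimates. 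The output is that $\hat\beta_n(u)/n \to \hat\gamma(u)$ $\Pp$-a.s.\ for some deterministic $\hat\gamma(u)$.

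To identify $\hat\gamma(u) = -\gamma(-u)$ I would use the marginal identity
\[
\Pp(y \in \mc{B}_o) = \Pp(-y \in \mc{C}_o), \qquad y \in \Z^d,
\]
which follows from translation invariance: a path from $y$ to $o$ in $\mc{G}$ is, after shifting the environment by $-y$, a path from $o$ to $-y$ in an environment with the same law as $\mc{G}$. Applied at $y = \lfloor sn\rfloor\vv + nu$, Theorem \ref{shapetheorem}(e) gives $\Pp(\lfloor sn\rfloor\vv + nu \in \mc{B}_o) = \Pp(-\lfloor sn\rfloor\vv - nu \in \mc{C}_o) \to 1$ for $s < -\gamma(-u)$, which combined with the a.s.\ limit from the first paragraph forces $\hat\gamma(u) \ge -\gamma(-u)$. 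The reverse inequality $\hat\gamma(u) \le -\gamma(-u)$ follows from a union bound over $k \ge \lfloor sn\rfloor$, controlled by the exponential-in-$n$ decay of $\Pp(-k\vv - nu \in \mc{C}_o)$ for $k/n > -\gamma(-u)$ --- the same large-deviation input supplied by the proof of Theorem \ref{shapetheorem}.

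Part (e) then follows from (a) by exactly the cone/convexity argument used to derive Theorem \ref{shapetheorem}(e) from its part (a): cover $\partial(-C) \cap O_r$ by finitely many rational directions, invoke the one-dimensional limits $\hat\beta_n(u_i)/n \to -\gamma(-u_i)$, and use the convexity and cone structure of $-C$ (inherited from $C$) to sandwich $\frac1n\mc{B}_o$ between $O_\epsilon$-neighbourhoods of $-C$. The main obstacle is the first paragraph: although the strategy is directly parallel to that of Theorem \ref{shapetheorem}, the stationary proxy, the bootstrap constants, and the path-counting and large-deviation inputs all need to be re-derived with the reversed time direction in mind, whereas the identification of the limit and the geometric deduction of (e) are then comparatively routine.
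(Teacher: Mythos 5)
The paper omits the proof of Theorem~\ref{shapetheoremB}, so there is no direct comparison to make; but your plan is the natural adaptation and matches what the paper evidently intends. Two remarks, one positive and one corrective.

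The positive remark: the translation-invariance identity $\Pp(y\in\mc{B}_o)=\Pp(-y\in\mc{C}_o)$ can be exploited even more cleanly than you state. Since both $\mc{C}_o$ and $\mc{B}_o$ are monotone along the $\vv$ direction (any site of the half-orthant model always has the outgoing arrows $\mc{E}_+$ available, so $\mc{C}_o$ is closed under adding $\vv$ and $\mc{B}_o$ under subtracting $\vv$), applying the identity at the single threshold point $y=k\vv+nu$ gives the full marginal distributional equality $\hat\beta_n(u)\stackrel{d}{=}-\beta_n(-u)$ for each fixed $n$: indeed $\Pp(\hat\beta_n(u)\ge k)=\Pp(k\vv+nu\in\mc{B}_o)=\Pp(-k\vv-nu\in\mc{C}_o)=\Pp(\beta_n(-u)\le -k)$. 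Combined with Theorem~\ref{shapetheorem}(a), this already yields convergence in probability $\hat\beta_n(u)/n\to-\gamma(-u)$, so once paragraph~1 delivers almost sure convergence to \emph{some} deterministic constant, that constant is forced to be $-\gamma(-u)$ and the identification is complete.

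The corrective remark concerns the second paragraph's ``reverse inequality'' step: the union bound over $k\ge\lfloor sn\rfloor$ is both unnecessary (by the monotonicity just described, $\{\hat\beta_n(u)\ge k\}$ is the single event $\{k\vv+nu\in\mc{B}_o\}$, not a union) and not clearly available. The paper establishes an exponential-in-$n$ bound for the \emph{upper} tail of $B_n(u,v)$ (Lemma~\ref{lemma:exponentialfirstbound}), but the corresponding \emph{lower} deviation of $\beta_n(-u)$ — which is what $\Pp(-k\vv-nu\in\mc{C}_o)$ measures for $k/n$ slightly larger than $-\gamma(-u)$ — is controlled in the proof of Lemma~\ref{lem:beta0} only by a bound that tends to zero, not an explicitly exponential one; exponential decay is only shown once $k$ is so large that the crude self-avoiding-walk bound of Lemma~\ref{SAWargument} kicks in. So the claimed exponential-in-$n$ decay for the full range $k/n>-\gamma(-u)$ is not supplied by the paper and the union bound might not be summable. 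Fortunately, as noted above, you do not need it.

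Aside from this, the rest of your plan — superadditivity, the backward analogue of the slab proxy $B_n(u,v)$, adaptation of the self-avoiding-walk and large-deviation inputs with $\mc{E}_+/\mc{E}_-$ roles interchanged (because $\mc{B}_o$ always contains the negative orthant just as $\mc{C}_o$ contains the positive one), and deducing (e) from (a) by the same discretization/cone argument — is sound and presumably the proof the authors had in mind when they chose to omit it.
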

Section \ref{sec:subadditive} is devoted to the proof of Theorem \ref{shapetheorem}. Section \ref{sec:lemmas} verifies the Lemmas required for that proof. Corollaries \ref{cor:Lxlimits} and \ref{cor:finestructure} are proved in Section \ref{sec:corollaries}.  
The proof of Theorem \ref{shapetheoremB} is omitted.

\section{Proof of Theorem \ref{shapetheorem}}
\label{sec:subadditive}
The results about limit cones in \cite{DRE2} used oriented percolation in $\Z^2$, and therefore subadditivity in an indirect way (see \cite{Dur84}). The higher dimensional analogue in Theorem \ref{shapetheorem} will directly rely on subadditivity, borrowing from the approach taken with the shape theorems of first passage percolation. See \cite{Kesten} or \cite{ADH}. In particular, our goal is to prove a shape theorem analogous to that of Cox and Durrett \cite{CD}.

Recall that we are working with the half-orthant model, where $\mu(\{\mc{E}_+\})=p=1-\mu(\{\mc{E}\})$.  We will prove Theorem \ref{shapetheorem} via a sequence of lemmas.

Recall that $\vv=e_1+\dots+e_d$. Our notation for coordinates will be that $x^{\sss[i]}=x\cdot e_i$ for $i\in [d]$.  For $0\le \eta\le 1$ consider the cone 
$$
K_\eta=\{x\in\mathbb{R}^d: x\cdot\vv\ge \eta\|x\|_1\}.
$$
The case $\eta=0$ is a half-space, while $\eta=1$ is the positive orthant. The following result (together with a simple application of the Borel-Cantelli Lemma) shows that for each $\eta\in [0,1)$, if $p$ is sufficiently large then $\mathcal{C}_o\subset K_\eta-M\vv$ for some random $M>0$ almost surely.
\begin{lemma}
	\label{SAWargument}
There exists $\theta(d)>1$ such that the following holds.  For $\eta\in [0,1)$ there is a $p_0(\eta,d)<1$ for which $p>p_0$ implies that there exists $c_{\ref{SAWargument}}=c_{\ref{SAWargument}}(\eta,d)>0$ such that $\Pp(\mathcal{C}_o\not\subset K_\eta-m\vv)\le c_{\ref{SAWargument}}\theta^{-md}$ for each $m \in \Z_+$. 
%\todo{I mentioned this problem for $p>p_c$ to Thomas Beekenkamp who is a PhD student who told me that he proved such a result for the ``corrupt compass model'', which is a kind of degenerate random environment that is a bit more percolation-like}
\end{lemma}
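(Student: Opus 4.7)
The plan is a direct union bound over directed self-avoiding walks (SAWs) in the environment, as suggested by the lemma's label. The starting observation is that every $x\in\mc{C}_o$ is witnessed by a directed SAW from $o$ to $x$ along arrows of the environment, obtained by excising loops from any directed path. If such a SAW has length $n$ and takes $k$ of its steps in $\mc{E}_-$, then $x\cdot\vv=n-2k$ and $\|x\|_1\le n$; moreover, since each negative step requires its starting vertex to lie in $\Omega_-$, and these $k$ vertices are distinct, the walk is present in the environment with probability exactly $q^k$, where $q=1-p$.

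Next I would convert the hypothesis $x+m\vv\notin K_\eta$ into a quantitative constraint on $k$. Using $\|x+m\vv\|_1\le n+md$, the defining inequality $(x+m\vv)\cdot\vv<\eta\|x+m\vv\|_1$ rearranges to
$$
k>\tfrac{1-\eta}{2}(n+md).
$$
The number of length-$n$ directed walks from $o$ with a prescribed set of $k$ negative-step positions is at most $d^n$ (choose a coordinate for each step), so the number with exactly $k$ negative steps is at most $\binom{n}{k}d^n$. A union bound over such walks gives
$$
\Pp\bigl(\mc{C}_o\not\subset K_\eta-m\vv\bigr)\ \le\ \sum_{n\ge 1}\sum_{k>\frac{1-\eta}{2}(n+md)}\binom{n}{k}\,d^n\,q^k,
$$
and estimating $\binom{n}{k}\le 2^n$ together with $q^k\le q^{(1-\eta)(n+md)/2}$ throughout the summation range, this is dominated by
$$
q^{(1-\eta)md/2}\sum_{n\ge 1}(n+1)\bigl[2d\,q^{(1-\eta)/2}\bigr]^n.
$$

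The one mildly subtle point is isolating a decay rate $\theta(d)>1$ that depends on $d$ alone. I would first fix any such $\theta(d)>1$ (for instance $\theta(d)=2$), then choose $p_0(\eta,d)$ close enough to $1$ that, simultaneously, $2d\,q^{(1-\eta)/2}\le 1/2$ (so the geometric series sums to an $(\eta,d)$-dependent constant $c_{\ref{SAWargument}}$) and $q^{(1-\eta)/2}\le\theta(d)^{-1}$ (so the prefactor $q^{(1-\eta)md/2}$ is at most $\theta(d)^{-md}$). The entire $\eta$-dependence is then absorbed into $p_0$ and $c_{\ref{SAWargument}}$, as required; the main work in this lemma is really the elementary geometric bookkeeping in the first two steps rather than any delicate probabilistic estimate.
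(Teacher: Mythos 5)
Your proof is correct and follows essentially the same approach as the paper: a union bound over self-avoiding walks consistent with the environment, observing that escape from $K_\eta-m\vv$ forces the number of negative steps (each costing a factor $1-p$) to grow linearly in $md$, which for $p$ near $1$ overwhelms the exponential count of walks. The bookkeeping differs slightly — the paper parametrizes by the endpoint $x\in\partial_{\eta,m}$ and an ``excess'' step count $k$, bounding SAWs by $\theta^{\|x\|_1+2k}$ and separately counting boundary vertices with $x^-=j$, whereas you parametrize by (total length $n$, number of negative steps $k$) with the cruder count $\binom{n}{k}d^n$ — but the mechanism and the resulting estimate are the same.
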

This result is a reformulation of Theorem 4.2 of \cite{DRE}. We will nevertheless give the full (but short) proof in Section \ref{sec:lemmas}, because a similar argument will be needed in other settings, elsewhere in this paper.

A similar result holds for the cluster $\mc{B}_o$, which then implies that the bi-connected cluster $\mc{M}_o=\mc{C}_o\cap\mc{B}_o$ is finite whenever $p>p_0(\eta,d)$ for some $\eta>0$. 
\begin{open}
\label{biconnectedconjecture}
For the orthant or half orthant model, show that $\mc{M}_o$ is a.s. finite, whenever $p>p_c$, where $p_c$ is as in Theorem \ref{thm:other2}.
\end{open}

%\begin{lemma}
%\label{lem:lbondotproduct}
%Let $u_0\in\Z^d$. There is a $j\in\Z$ and $v\in\Z^d$ such that the following hold with $u=u_0+j\vv$:
%\begin{enumerate}
%\item[i)] $\|v\|_\infty=1$,
%\item[ii)] $v\cdot\vv=0$,
%\item[iii)] $u\cdot v\ge \|u\|_1$.
%\end{enumerate}
%\end{lemma}
%\begin{proof}
%Without loss of generality we may assume that  $u^{\sss[1]}_0\le  \dots\le u^{\sss[d]}_0$.  Let $k=\lceil d/2 \rceil$,  $j=-u_0^{\sss [k]}$, and $u = u_0+j\vv$.  Then $u^{\sss[k]}=0$ and $u^{\sss[1]}\le  \dots\le u^{\sss[k-1]}\le 0\le u^{\sss[k+1]}\dots \le u^{\sss[d]}_0$.

%Take $v^{\sss[i]}=-1$ for $i<k$ and $v^{\sss[i]}=1$ for $i>k$.  If $d$ is even also set $v^{\sss[k]}=-1$ (so exactly half of the coordinates of $v$ are $\pm1$ respectively).  If $d$ is odd set $v^{[k]}=0$.  It is now easy to verify that i), ii), and iii) all hold.
%\end{proof}
Before we continue with our sequence of lemmas, note that for every $n \in \N$
\begin{equation}
\beta_n(u+r\vv)=\beta_n(u)-nr,\quad  \text{ for any $r \in \Z$.}\label{beta+1}
\end{equation}
 So if (a) of the Theorem holds for $u$, then it also holds for any $u+r\vv$ ($r\in \Z$) with  $\gamma(u+r\vv)=\gamma(u)-r$.   Since $o\in\mc{C}_o$, we know $\beta_1(o)\le 0$. By Lemma \ref{SAWargument}, $\beta_1(o)$ is an integrable random variable. By definition, $\beta_n(o)=\beta_1(o)$, so in fact 
\begin{equation}
\gamma(o)=\lim_{n\to\infty}\frac1n \beta_n(o)=0.\label{fishy}
\end{equation} 
Therefore part (a) of the theorem holds for $u=o$. By the above remark, it also holds for $u$ any multiple of $\vv$, with $\gamma(r\vv)=-r$.

Therefore, to prove part (a) of the theorem we may assume $u$ is not of the form $j\vv$.   Then there exists $v\in \R^d$ such that 
	\begin{equation}
	\label{wlogstuffforu}
	\text{$u\cdot v>0$ and $v\cdot\vv=0$,}
	\end{equation}
(for example, we may take $v$ to be the projection of $u$ onto the hyperplane orthogonal to $\vv$, i.e. $v=u-\frac{u\cdot \vv}{d}\vv$).  Fix any such $v$.  Then we may define $\sigma=\sigma(u,v)>0$ by
	\begin{equation}
	\label{sigmaconstant}
	\sigma=\frac{u\cdot v}{\|u\|_1\|v\|_\infty}.
	\end{equation} 

Define $\Lambda_{u,v}(m,n)=\{z\in \mathbb{Z}^{d}: m u\cdot v\le z\cdot v< n u\cdot v\}$. In other words, $\Lambda_{u,v}(m,n)$ is a slab in $\mathbb{Z}^{d}$, running orthogonal to $v$, and containing $mu$ and $nu$ on its boundary - see Figure \ref{fig:Lambdamn}.
\begin{figure}
	\begin{center}
		\begin{tikzpicture}[thick,scale=1]
		\path[->,draw=black,>=latex] (0,0)--(1,1);
		\node at (1,1.5) {$\vv$};
		\path[->,draw=black,>=latex] (0,0)--(1.5,0);
		\node at (1.6,0.2) {$u$};
		\path[->,draw=black,>=latex] (0,0)--(0.7,-0.7);
		\node at (0.8,-0.5) {$v$};
		\node[circle,fill=black,scale=.5] at (2.5,0) {};
		\node at (2.2,0.2) {$mu$};
		\node[circle,fill=black,scale=.5] at (3.5,0) {};
		\node at (3.8,-0.2) {$nu$};
		\path[<->,dotted,draw=black,>=latex] (-1,-1)--(2.4,2.4);
		\path[<->,draw=black,>=latex] (1.5,-1)--(4.9,2.4);
		\path[<->,dashed,draw=black,>=latex] (2.5,-1)--(5.9,2.4);
		\foreach \x in {-.8,-.6,-.4,-.2,0,.2,.4,.6,.8,1,1.2,1.4,1.6,1.8,2,2.2}
		{  \path[draw=gray] (2.5+\x,\x)--(3.5+\x,\x);  }
		\end{tikzpicture}
	\end{center}
	\caption{An illustration of $\Lambda_{u,v}(m,n)$ in two dimensions.  Here $\Lambda_{u,v}(m,n)$ is the hatched region, including the  solid line but not the  dashed line. $\Lambda_{u,v}(m)$ is the region from the dotted line to the solid line, including the former but not the latter.  $\Lambda_{u,v}(n,\infty)$ is the region to the right of  the dashed line.}
	\label{fig:Lambdamn}
\end{figure}
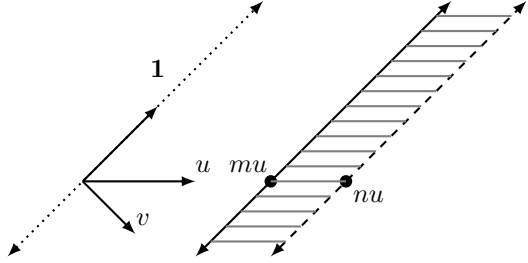
Note that if $x$ is any point of $\Lambda_{u,v}(m,n)$,
then $x+k\vv\in \Lambda_{u,v}(m,n)$ for every $k\in\Z$. We also define $\Lambda_{u,v}(-\infty,n)=\{z\in \mathbb{Z}^{d}: z\cdot v< n u\cdot v\}$ and $\Lambda_{u,v}(n,\infty)=\{z\in \mathbb{Z}^{d}: z\cdot v\ge n u\cdot v\}$.

For $\Lambda\subset\mathbb{Z}^{d}$ and $x\in\Lambda$, let $\mc{C}_x[\Lambda]$ be $x$ together with the set of $y\in \Z^d$ we can reach from $x$ by following arrows (consistent with the environment) that start in $\Lambda$. Note that $y$ itself need not be in $\Lambda$.
For $n\in \N$, set
$B_n(u,v)=\inf\{k: k\vv+nu\in\mc{C}_o[\Lambda_{u,v}(0,n)]\}$. This is $>-\infty$ by Lemma \ref{SAWargument} but (as remarked in the proof of Lemma \ref{momentbounds}) could $=+\infty$.

Recall the notation $p_0(\eta,d)$ from Lemma \ref{SAWargument}, and henceforth write $p_0(d)$ for $p_0(0,d)$.  Write $k^+=\max\{k,0\}$ and $k^-=\max\{-k,0\}$ for $k \in \R$.
\begin{lemma}
	\label{momentbounds}
	Assume \eqref{wlogstuffforu} and that $p>p_0(d)$. 
	There exist constants $\Gamma$ and $n_0$ such that
	\begin{align*}
	&\Ep[(B_n(u,v))^-]\le \Ep[(\beta_n(u))^-]\le \Gamma+\frac{n}{d}\|u\|_1
	\quad\text{for each $n\in \N$, and}\\
	&(\beta_n(u))^+\le (B_n(u,v))^+\le n\|u\|_1 
	\quad\text{for $n\ge n_0$.}
	\end{align*}
	Here $\Gamma$ depends only on $d$, but $n_0$ may depend on the choices of $u$ and $v$ (but not $p$).
	There exists an integrable random variable $Y\ge 0$ such that $\sup_n \frac{|\beta_n(u)|}{n}\le Y$. For $n\ge n_0(u,v)$, $B_n(u,v)$ has a moment generating function that is finite in a neighbourhood of 0.
\end{lemma}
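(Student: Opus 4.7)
The plan is to compare $\beta_n(u)$ with $B_n(u,v)$ via the trivial set-inclusion, then bound the negative tail using Lemma~\ref{SAWargument} and bound the positive tail by explicitly constructing a positive-step path that stays inside the slab $\Lambda_{u,v}(0,n)$.

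First, $\mc{C}_o[\Lambda_{u,v}(0,n)]\subseteq \mc{C}_o$ (fewer available arrows means fewer reachable vertices), so $\beta_n(u)\le B_n(u,v)$, which yields the two trivial inequalities $B_n^-\le\beta_n^-$ and $\beta_n^+\le B_n^+$. Apply Lemma~\ref{SAWargument} with $\eta=0$ (valid for $p>p_0(d)$) and let
\begin{equation*}
M := \min\{m\in\Z_+ : \mc{C}_o \subseteq K_0 - m\vv\};
\end{equation*}
then $\Pp(M\ge m)\le c_{\ref{SAWargument}}(0,d)\,\theta^{-md}$, so $M$ has exponential tails and $\Gamma:=\Ep[M]$ depends only on $d$. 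Since $K_0-m\vv=\{z:z\cdot\vv\ge -md\}$, every $z\in\mc{C}_o$ satisfies $z\cdot\vv\ge -Md$; applied to $z=\beta_n(u)\vv+nu$ with $u\cdot\vv\le\|u\|_1$, this gives $\beta_n(u)^-\le M+\tfrac{n}{d}\|u\|_1$, and taking expectations yields the first displayed bound.

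For the upper bound on $B_n^+$, the crucial feature of the half-orthant model is that every site admits each arrow in $\mc{E}_+$, so a purely positive walk from $o$ is admissible regardless of the environment. Choose $k=n\max_i(u^{[i]})^-\le n\|u\|_1$, so that $T:=k\vv+nu$ has nonnegative coordinates $a_i:=k+nu^{[i]}\ge 0$; reaching $T$ from $o$ takes exactly $a_i$ steps of $e_i$, i.e.\ $N:=kd+nu\cdot\vv$ positive steps in total. I would order these $N$ steps so that the running $v$-coordinate tracks the straight line from $0$ to $nu\cdot v$ (with error at most $\|v\|_\infty$ per step). Since $v\cdot\vv=0$, both positive and negative $v$-increments occur in the multiset $\{a_i\text{ copies of }e_i\}$, so a greedy ``follow-the-line'' interleaving keeps the partial sums within a bounded tube around the linear trajectory, and hence inside $[0,nu\cdot v)$, provided $nu\cdot v$ is large compared to $d\|v\|_\infty$. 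This is precisely the condition $n\ge n_0(u,v)$, and the resulting slab-respecting path gives $B_n(u,v)\le k\le n\|u\|_1$. Without the slab constraint, the same positive-path construction shows $\beta_n(u)\le n\|u\|_1$ for every $n\ge 1$.

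Combining the two sides, $\beta_n(u)\in[-M-\tfrac{n}{d}\|u\|_1,\,n\|u\|_1]$ for all $n\ge 1$, so $Y:=M+(1+\tfrac{1}{d})\|u\|_1$ dominates $\sup_n|\beta_n(u)|/n$ and is integrable because $M$ is. For the MGF, when $n\ge n_0(u,v)$ we have $B_n\le n\|u\|_1$ deterministically and $-B_n\le B_n^-\le M+\tfrac{n}{d}\|u\|_1$, so $\Ep[e^{\lambda B_n}]<\infty$ for $|\lambda|$ small enough that $\Ep[e^{|\lambda|M}]<\infty$, which by the exponential tail of $M$ is a neighbourhood of $0$. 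The main obstacle is the slab-path construction in the third paragraph: given a prescribed multiset of $a_i$ positive steps in each direction $e_i$, constructing an interleaving that keeps the running $v$-coordinate in $[0,nu\cdot v)$ is what forces the threshold $n\ge n_0(u,v)$, and is the only step where the slab geometry enters nontrivially.
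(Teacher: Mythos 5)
Your proposal is correct and follows essentially the same route as the paper: the negative part via Lemma~\ref{SAWargument} with $\eta=0$ and an integrable random shift of the cone $K_0$, the positive part via a path of purely positive steps to $k\vv+nu$ (available at every site in the half-orthant model) reordered to stay in $\Lambda_{u,v}(0,n)$ for $n\ge n_0(u,v)$, and the MGF from the deterministic upper bound together with the exponential tail of the shift. The one soft spot — your ``greedy follow-the-line'' interleaving — is exactly the step the paper carries out by a case analysis (near the lower boundary a remaining step with $e_i\cdot v>0$ must exist, and near the upper boundary either a step with $e_i\cdot v\le 0$ exists or the remaining partial sums increase monotonically to $nu\cdot v$ and so stay below it); note that what you actually need and get is containment of the partial sums of $y_j\cdot v$ in $[0,nu\cdot v)$, not a bounded tube around the linear trajectory.
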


Lemma \ref{momentbounds} will provide us with bounds that, together with a subadditivity argument, give the following result.
\begin{lemma}
	\label{lem:B_n}
For any $u\in \Z^d \setminus (\Z\vv)$, there is a constant $\gamma(u)\in \R$ such that for all $p>p_0(d)$, and all $v\in \R^d$ satisfying \eqref{wlogstuffforu}, $\P_p$-almost surely,
\begin{equation}
\gamma(u):=\lim_{n\to \infty}\frac{B_n(u,v)}{n}.
\end{equation}
Moreover, $n^{-1}B_n(u,v)\to \gamma(u)$ in $L^1$, and $\gamma(u)=\inf_{n\ge 1} n^{-1} \E_p[B_n(u,v)]$.
\end{lemma}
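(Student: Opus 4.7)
The plan is to exploit the $\vv$-invariance of the slab $\Lambda_{u,v}(m,n)$ (arising from $\vv\cdot v=0$) together with the i.i.d.\ structure of the environment to establish a conditional subadditivity for $B_n(u,v)$. Concretely, for $0\le m<n$ set $K:=B_m(u,v)$ (measurable with respect to the environment restricted to $\Lambda_{u,v}(0,m)$) and
\[
\tilde X_{m,n}\;:=\;\inf\bigl\{k:nu+k\vv\in\mc{C}_{mu+K\vv}[\Lambda_{u,v}(m,n)]\bigr\}\;-\;K.
\]
Concatenating a path realizing $B_m(u,v)$ with one from $mu+K\vv$ to $nu+(K+\tilde X_{m,n})\vv$ inside $\Lambda_{u,v}(m,n)$ yields $B_n(u,v)\le B_m(u,v)+\tilde X_{m,n}$ pointwise, and the $\vv$-invariance of $\Lambda_{u,v}(m,n)$ combined with the translation-invariance of the i.i.d.\ environment forces $\tilde X_{m,n}$ to be independent of the environment on $\Lambda_{u,v}(0,m)$ and distributed as $B_{n-m}(u,v)$. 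Taking expectations gives $\E_p B_{m+n}\le\E_p B_m+\E_p B_n$, so Fekete's lemma (with integrability from Lemma~\ref{momentbounds}) provides $\gamma(u):=\lim_n n^{-1}\E_p B_n(u,v)=\inf_n n^{-1}\E_p B_n(u,v)\in\R$.

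Iterating the conditional subadditivity with fixed block size $k\ge n_0$, one obtains for each $q\ge 1$ the pointwise bound
\[
B_{qk}(u,v)\;\le\;B_k(u,v)+\sum_{i=1}^{q-1}\tilde X^{(i)}_k,
\]
where $(\tilde X^{(i)}_k)_{i\ge 1}$ are i.i.d.\ copies of $B_k(u,v)$ and jointly independent of $B_k(u,v)$ (after conditioning on successive offsets, each $\tilde X^{(i)}_k$ is a function of the environment restricted to the disjoint slab $\Lambda_{u,v}(ik,(i+1)k)$). Since $B_k$ has moment generating function finite in a neighbourhood of $0$ by Lemma~\ref{momentbounds}, Cramer's theorem for the i.i.d.\ sum combined with the Borel--Cantelli lemma gives $\limsup_q(qk)^{-1}B_{qk}(u,v)\le k^{-1}\E_p B_k(u,v)$ $\Pp$-a.s. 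Interpolating over $n\in[qk,(q+1)k)$ via $B_n\le B_{qk}+\tilde Y$ with $\tilde Y\stackrel{d}{=}B_r$ (controlled by the tail estimates of Lemma~\ref{momentbounds}), and then sending $k\to\infty$, produces $\limsup_n n^{-1}B_n(u,v)\le\gamma(u)$ $\Pp$-a.s.

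The main obstacle is the matching $\Pp$-a.s.\ lower bound, since the conditional subadditivity is one-sided and a symmetric Chernoff estimate on $-B_{qk}$ is not directly available. Following the strategy outlined in the abstract, the plan is to pass to a substitute quantity $\tilde Y_{m,n}$---obtained by taking an appropriate infimum over starting offsets along the $\vv$-direction---that is pointwise subadditive in $m,n$ and enjoys the distributional stationarity required by Liggett's form of Kingman's subadditive ergodic theorem, yielding $n^{-1}\tilde Y_{0,n}\to\alpha(u)$ $\Pp$-a.s.\ and in $L^1$. One then ``bootstraps'' from $\tilde Y$ to $B$ by showing $B_n(u,v)-\tilde Y_{0,n}=o(n)$ $\Pp$-a.s., using the cluster-containment estimate of Lemma~\ref{SAWargument} (which limits the set of starting offsets relevant to $\tilde Y$) and the large-deviation bounds of Lemma~\ref{momentbounds} (which control the discrepancy between the two quantities after a union bound over those offsets). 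Uniform integrability of $n^{-1}B_n(u,v)$, which follows from the domination $|B_n|/n\le Y+\|u\|_1$ for an integrable $Y\ge 0$ provided by Lemma~\ref{momentbounds}, then upgrades a.s.\ convergence to $L^1$ convergence; matching with the Fekete formula of the first paragraph identifies $\alpha=\gamma(u)$.
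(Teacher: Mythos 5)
Your first paragraph sets up exactly the same conditional subadditivity and independence/translation argument as the paper (your $\tilde X_{m,n}$ is the paper's $B_{m,n}(u,v)$), and the Fekete and Chernoff/Borel--Cantelli steps are sound. But you have misdiagnosed where the difficulty lies, and as a result you leave the $\liminf$ direction---which you yourself flag as the ``main obstacle''---without an actual argument.

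The point you have missed is that the two-parameter array $B_{m,n}(u,v)$ \emph{already} satisfies the stationarity hypotheses (1.8)--(1.9) of Liggett's subadditive ergodic theorem, including the boundary case $m=0$: because $\Lambda_{u,v}(m,n)$ is invariant under shifts by multiples of $\vv$, and the random offset $B_m(u,v)\vv$ is independent of the environment on $\Lambda_{u,v}(m,n)$, the shifted environment has the same law and remains independent of $\Lambda_{u,v}(0,m)$. Hence $B_{m,m+k}\stackrel{d}{=}B_k$ for all $m\ge 0$, and $\{B_{nk,(n+1)k}\}_{n\ge 0}$ is stationary (in fact i.i.d.). Combined with the moment bounds of Lemma~\ref{momentbounds} (which, as the paper notes, only need to hold for $n\ge n_0$, requiring a minor modification of one step in Liggett's proof), the theorem applies \emph{directly} to $B_n(u,v)$ and yields the almost sure limit, the $L^1$ limit, and the identification $\gamma(u)=\inf_n n^{-1}\E_p[B_n(u,v)]$ all at once. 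Your Chernoff step is therefore not needed, and your proposed detour through an unspecified ``substitute quantity $\tilde Y_{m,n}$ obtained by taking an appropriate infimum over starting offsets'' does not correspond to anything in the actual proof and is never made concrete, so the lower bound is genuinely missing. The ``substitute quantity'' idea from the abstract refers to replacing $\beta_n(u)$ by $B_n(u,v)$ --- a substitution that has already been made by the time this lemma is stated; it is $\beta_n(u)$ that lacks stationarity (and also $\beta^0_n$ in Lemma~\ref{lem:beta0}, where the $m=0$ case of (1.8) fails and a workaround really is needed), not $B_n(u,v)$.
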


\medskip

We will need the following large-deviation type estimate.

\begin{lemma}
	\label{lemma:exponentialfirstbound} Assume that $u$ and $v$ satisfy \eqref{wlogstuffforu}. For any $p>p_0(d)$ and $\delta>0$, there is a $\bar c>0$ and an $n_1$ such that 
	$\Pp\Big(B_n(u,v)\ge n(\gamma(u)+4\delta)\Big)\le e^{-n\bar c}$ whenever $n\ge n_1$.
\end{lemma}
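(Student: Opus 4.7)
The plan is to adapt the Cramér large deviation bound to our (almost) subadditive setup. The starting point is to pick a block size $m\ge n_0(u,v)$ large enough that $\E_p[B_m(u,v)]\le m(\gamma(u)+\delta)$; this choice is possible because $n^{-1}B_n(u,v)\to\gamma(u)$ in $L^1$ by Lemma \ref{lem:B_n}. The idea is then to decompose the slab $\Lambda_{u,v}(0,n)$ into $k-1$ consecutive ``thin'' slabs of width $m$ plus a ``remainder'' slab of width $r\in[m,2m)$, where $n=(k-1)m+r$, and to dominate $B_n(u,v)$ by one contribution from the remainder slab plus $k-1$ i.i.d.\ copies of $B_m(u,v)$, to which a Chernoff bound can be applied.

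Concretely, I would proceed iteratively, passing through successive slabs. Define $S_0$ to be the smallest $k$ so that $ru+k\vv$ is reachable from $o$ using the environment in $\Lambda_{u,v}(0,r)$, so by construction $S_0$ has the law of $B_r(u,v)$. Then inductively, having reached $(r+(j-1)m)u + T_{j-1}\vv$ with $T_{j-1}=S_0+K_1+\dots+K_{j-1}$, let $K_j$ be the smallest $k$ so that $(r+jm)u + (T_{j-1}+k)\vv$ is reachable from the previous position using the environment in $\Lambda_{u,v}(r+(j-1)m,\,r+jm)$. The point of the argument is that $\vv\cdot v=0$, so each new starting point lies on the left boundary of its slab, and hence by translation invariance of the environment $K_j$ is distributed as $B_m(u,v)$; and because the slabs involved are pairwise disjoint, the $K_j$'s and $S_0$ are mutually independent. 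This yields the deterministic domination
\[
B_n(u,v) \le S_0 + K_1 + \dots + K_{k-1}.
\]

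Having reduced matters to a sum of independent random variables, the final step is a Cramér/Chernoff bound. Since $m\ge n_0$, Lemma \ref{momentbounds} tells us that $B_m(u,v)$ has a finite moment generating function in a neighbourhood of zero, so because $\E_p[B_m(u,v)]\le m(\gamma(u)+\delta)<m(\gamma(u)+2\delta)$, there is a constant $c_m>0$ with $\Pp\bigl(\sum_{j=1}^{k-1}K_j\ge (k-1)m(\gamma(u)+2\delta)\bigr)\le e^{-c_m(k-1)}$. The remainder $S_0$ is controlled deterministically: since $r\ge m\ge n_0$, Lemma \ref{momentbounds} gives $S_0\le r\|u\|_1\le 2m\|u\|_1$. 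Combining these, for $n\ge n_1$ chosen large enough that $2m\|u\|_1 + 2m|\gamma(u)+2\delta|\le 2n\delta$, the complement of the bad event forces $B_n(u,v)\le n(\gamma(u)+4\delta)$; since $k-1\ge n/(2m)$ for such $n$, the failure probability is at most $e^{-\bar c n}$ with $\bar c=c_m/(2m)$. The one place where real care is needed is the block decomposition itself: one must justify that the iterative construction produces genuine i.i.d.\ copies of $B_m(u,v)$ despite the fact that each block begins at a random point of the form $\text{(lattice corner)}+T_{j-1}\vv$. This is exactly the step that rests on the identity $\vv\cdot v=0$ together with the independence of environments across disjoint slabs.
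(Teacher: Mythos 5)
Your proposal is correct and follows essentially the same route as the paper: decompose the slab into a bounded remainder piece plus order-$n/m$ disjoint slabs of fixed width $m$, use the $\vv$-shift/translation-invariance and disjointness of slabs to get i.i.d.\ copies of $B_m(u,v)$ dominating $B_n(u,v)$, bound the remainder deterministically via Lemma \ref{momentbounds}, and conclude with a Chernoff bound using the finite moment generating function from Lemma \ref{momentbounds}. The only differences (remainder block placed first rather than last, and absorbing the sign of $\gamma(u)$ into the choice of $n_1$ instead of a two-case computation) are cosmetic.
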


Since $\beta_n(u)\le B_n(u,v)$ it is clear that 
\begin{equation}
\limsup_{n\to\infty}\frac{\beta_n(u)}{n}\le\gamma(u).\label{betalimsup}
\end{equation}
To go further, we must address the possibility of paths that cross on either the positive or negative side of $\Lambda_{u,v}(0,n)$, and then backtrack, to get lower than $n\gamma(u)$.

 For $M,n\in \N$, let $A'_n(M)$ denote the event that there exists a self avoiding path consistent with the environment, running from $o$ to some point $k\vv+nu$ with $k<n\gamma(u)$, that hits $\Lambda_{u,v}(-\infty,-M)$. See Figure \ref{fig:stepfour}.   Similarly, let $A''_n(M)$ be the event that there exists a self avoiding path consistent with the environment, running from $o$ to some point $k\vv+nu$ with $k<n\gamma(u)$, that hits $\Lambda_{u,v}(M+n,\infty)$.  See Figure \ref{fig:stepfour}.   
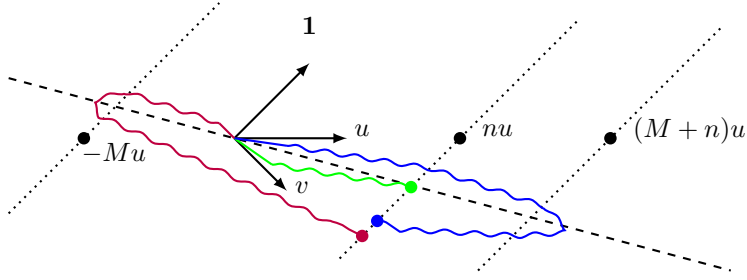
\begin{figure}
	\begin{center}
		\begin{tikzpicture}[thick,scale=1]
		\path[->,draw=black,>=latex] (0,0)--(1,1);
		\node at (1,1.5) {$\vv$};
		\path[->,draw=black,>=latex] (0,0)--(1.5,0);
		\node at (1.7,0.1) {$u$};
		\path[->,draw=black,>=latex] (0,0)--(0.7,-0.7);
		\node at (0.9,-0.65) {$v$};
		\node[circle,fill=black,scale=.5] at (-2,0) {};
		\node at (-1.6,-0.2) {$-Mu$};
		\node[circle,fill=black,scale=.5] at (3,0) {};
		\node at (3.5,0.1) {$nu$};
		\node[circle,fill=black,scale=.5] at (5,0) {};
		\node at (6.05,0.1) {$(M+n)u$};
		\path[dotted,draw=black,>=latex] (-3,-1)--(-0.2,1.8);
		\path[dotted,draw=black,>=latex] (1.4,-1.6)--(4.6,1.6);
		\path[dotted,draw=black,>=latex] (3.24,-1.76)--(6.5,1.5);
		\path[dashed,draw=black,>=latex] (-3,.8)--(6.6,-1.76);
		\path [draw=green] (0,0)--(.5,-.35);
		\path [draw=green,snake it] (.5,-.35) -- (2.35,-.65);
		\node[circle,fill=green,scale=.5] at (2.35,-.65) {};
		\path [draw=blue] (0,0)--(.7,-.1);
		\path [draw=blue,snake it] (.7,-.1) -- (2.6,-.4) -- (4.1,-.9) -- (4.35,-1.16) -- (3.8,-1.2) -- (1.9,-1.1);
		\node[circle,fill=blue,scale=.5] at (1.9,-1.1) {};
		\path [draw=purple] (0,0)--(-.2,.2);
		\path [draw=purple,snake it] (-.2,.2) -- (-1.4,.6) -- (-1.8,0.48) -- (-1.7,.3) -- (.7,-.8) -- (1.7,-1.3);
		\node[circle,fill=purple,scale=.5] at (1.7,-1.3) {};

		\end{tikzpicture}
	\end{center}
	\caption{A schematic illustrating the events $A'_n$ and $A''_n$. The dashed line lies in direction $u+\gamma(u)\vv$. The squiggly (green) curve in the middle represents a path from $o$ to $nu+B_n(u,v)\vv$. $A''_n(M)$ is the event that a path can be found such as the squiggly (blue) one on the right, that reaches significantly below the middle path, by first travelling into $\Lambda_{u,v}(M+n,\infty)$. $A'_n(M)$ is the event that a path such as the squiggly (purple) one on the left can be found, that also reaches well below the middle path, this time by first visiting $\Lambda_{u,v}(-\infty,-M)$.}
	\label{fig:stepfour}
\end{figure}

Finally let $\hat A_n$ be the event that there is a path from $o$ to some $k\vv$ with $k<0$ that is consistent with the environment and which reaches $\Lambda_{u,v}(n,\infty)$. See Figure \ref{fig:Anhat}.
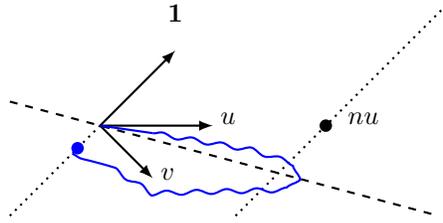
\begin{figure}
	\begin{center}
		\begin{tikzpicture}[thick,scale=1]
		\path[->,draw=black,>=latex] (0,0)--(1,1);
		\node at (1,1.5) {$\vv$};
		\path[->,draw=black,>=latex] (0,0)--(1.5,0);
		\node at (1.7,0.1) {$u$};
		\path[->,draw=black,>=latex] (0,0)--(0.7,-0.7);
		\node at (0.9,-0.65) {$v$};
		\node[circle,fill=black,scale=.5] at (3,0) {};
		\node at (3.5,0.1) {$nu$};
		\path[dotted,draw=black,>=latex] (1.8,-1.2)--(4.6,1.6);
		\path[dotted,draw=black,>=latex] (-1.2,-1.2)--(0,0);
		\path[dashed,draw=black,>=latex] (-1.2, .32)--(4.5,-1.2);
		\path [draw=blue] (0,0)--(.7,-.1);
		\path [draw=blue,snake it] (.7,-.1) -- (2,-.32) -- (2.4,-.45) -- (2.44,-.651) -- (2.3,-.8) -- (.4,-.85) -- (-.37,-.37);
		\node[circle,fill=blue,scale=.5] at (-.3,-.3) {};

		\end{tikzpicture}
	\end{center}
	\caption{A schematic illustrating the event $\hat A_n$. The dashed line lies in direction $u+\gamma(u)\vv$. $\hat A_n$ is the event that a path can be found such as the squiggly (blue) one, that reaches significantly below $o$, by first travelling into $\Lambda_{u,v}(n,\infty)$.}
	\label{fig:Anhat}
\end{figure}

\begin{lemma}
	\label{lemma:exponentialsecondbound} Assume that $u$ and $v$ satisfy \eqref{wlogstuffforu}. There exist  $c
	=c(u,v)>1$ and  $p_1=p_1(d) \in [p_0(d),1)$ such that
	$\Pp(\text{$A'_n(\lfloor cn\rfloor)$ i.o.})=\Pp(\text{$A''_n(\lfloor cn\rfloor)$ i.o.})=\Pp(\text{$\hat{A}_n$ i.o.})=0$ whenever $p>p_1$. 
\end{lemma}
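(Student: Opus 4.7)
The plan is to apply the Borel--Cantelli lemma: it suffices to show that each of $\sum_n \Pp(\hat A_n)$, $\sum_n \Pp(A'_n(\lfloor cn\rfloor))$, and $\sum_n \Pp(A''_n(\lfloor cn\rfloor))$ is finite. For each event we union bound over self-avoiding paths $\omega=(o=\omega_0,\ldots,\omega_L)$ that realise it. Since a self-avoiding path visits each site at most once and a step in $\mc{E}_-$ from a site $x$ requires $\mc{G}_x=\mc{E}$ (which has probability $1-p$), the probability that any given such $\omega$ is consistent with the environment equals $(1-p)^{L_-}$, where $L_-=L_-(\omega)$ is the number of negative steps along $\omega$. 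The number of paths from $o$ of length $L$ with exactly $L_-$ negative steps is at most $\binom{L}{L_-}d^L$ (choose the positions of the negative steps, then a coordinate axis for each step).

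For $\hat A_n$, a realising $\omega$ visits $\omega_j\in\Lambda_{u,v}(n,\infty)$ and ends at $\omega_L=k\vv$ with $k<0$. Since $|s\cdot v|\le\|v\|_\infty$ for each unit step, the visit leg requires $j\ge n u\cdot v/\|v\|_\infty=n\sigma\|u\|_1$, and the return leg (during which $\omega\cdot v$ must revert from $\ge nu\cdot v$ back to $0$) requires $L-j\ge n\sigma\|u\|_1$, so $L\ge 2n\sigma\|u\|_1$. The constraint $\omega_L\cdot\vv=kd<0$ forces $L_+<L_-$, i.e.\ $L_->L/2$. Combining these via $(1-p)^{L_-}\le (1-p)^{L/2}$ for $L_->L/2$,
\[
\Pp(\hat A_n)\le\sum_{L\ge 2n\sigma\|u\|_1}\sum_{L_->L/2}\binom{L}{L_-}d^L(1-p)^{L_-}\le\sum_{L\ge 2n\sigma\|u\|_1}(2d\sqrt{1-p})^L,
\]
which is summable in $n$ as soon as $p$ exceeds the threshold $p_1(d)$ at which $2d\sqrt{1-p_1(d)}<1$.

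For $A''_n(M)$ with $M=\lfloor cn\rfloor$ the template is identical but more delicate. A realising $\omega$ visits $\omega_j\in\Lambda_{u,v}(M+n,\infty)$ and ends at $\omega_L=nu+k\vv$ with $k<n\gamma(u)$. The length estimates now give $L\ge(n+2M)\sigma\|u\|_1$, while the endpoint constraint $L_+-L_-=nu\cdot\vv+kd<nb$, where $b:=u\cdot\vv+d\gamma(u)$, only yields $L_->(L-nb)/2$. This leads to
\[
\Pp(A''_n(M))\le(1-p)^{-nb/2}\sum_{L\ge(n+2M)\sigma\|u\|_1}(2d\sqrt{1-p})^L,
\]
which decays exponentially in $n$ provided $c=c(u,v)$ is taken large enough that
\[
(1+2c)\sigma\|u\|_1\log\tfrac{1}{2d\sqrt{1-p}}>\tfrac{b}{2}\log\tfrac{1}{1-p}.
\]
This inequality is achievable for every $p>p_1(d)$ once $p_1(d)$ is chosen slightly above $1-1/(4d^2)$ and $c(u,v)$ is correspondingly large. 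The event $A'_n(M)$ is entirely symmetric: the visit to $\Lambda_{u,v}(-\infty,-M)$ followed by return to $nu+k\vv$ yields the same lower bounds on $L$ and $L_-$, and hence the same estimate.

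The main obstacle is that, unlike for $\hat A_n$, the endpoint $nu+k\vv$ in $A'_n$ and $A''_n$ can have arbitrarily large positive $\vv$-coordinate, so the automatic bound $L_->L/2$ is unavailable. The device of taking $M=\lfloor cn\rfloor$ with $c=c(u,v)$ sufficiently large is precisely what forces the transverse excursion to be long enough to dominate the endpoint's $\vv$-coordinate, thereby recovering a positive fraction of negative steps along $\omega$. Balancing $c$ against $b/(\sigma\|u\|_1)$, and $p$ against $d$, is the technical heart of the argument.
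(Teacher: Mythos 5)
Your argument is correct and, at its core, follows the same strategy as the paper: Borel--Cantelli plus a union bound over self-avoiding paths in which each negative step costs a factor $1-p$, with the excursion forced by $M=\lfloor cn\rfloor$ supplying a path length of order $(1+2c)n\sigma\|u\|_1$. Where you differ is the bookkeeping, and the difference is genuine. The paper sums over the endpoint $x=j\vv+nu$ and the number of backtracking pairs $k$, and to control $\|x\|_1$ it must first rule out very negative $j$ by invoking Lemma \ref{SAWargument} (an auxiliary event of probability at most $c_{\ref{SAWargument}}\theta^{-Nd}$ with $N=\lfloor n^\alpha\rfloor$, added back into the estimate); the ratio of the entropy exponent to the $(1-p)$ exponent is then a function of $d$ alone, so $p_1=p_1(d)$ is immediate. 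You instead sum over the pair $(L,L_-)$ and extract $L_->(L-nb)/2$ directly from the endpoint constraint $L_+-L_-=nu\cdot\vv+kd<nb$ with $b=u\cdot\vv+d\gamma(u)$; this handles arbitrarily low endpoints automatically and dispenses with the truncation via Lemma \ref{SAWargument}, at the price that $b$ (hence $u$ and $\gamma$) enters the exponent. Two points should be made explicit to finish your version: (i) $b$ depends on $p$ through $\gamma(u)$, so it should be replaced by the uniform bound $b\le(1+d)\|u\|_1$, valid for all $p>p_0(d)$ by Lemma \ref{momentbounds}; and (ii) a single $c=c(u,v)$ must satisfy your displayed inequality for every $p>p_1$ simultaneously, which holds because, writing $t=\log\tfrac{1}{1-p}$, the function $t\mapsto t/(\tfrac{t}{2}-\log 2d)$ is decreasing for $t>2\log 2d$, so the worst case is $p=p_1$; it therefore suffices to take $p_1$ strictly above $1-1/(4d^2)$ (and at least $p_0(d)$) and then choose $c$ with $(1+2c)\sigma>\tfrac{1+d}{2}\cdot t_1/(\tfrac{t_1}{2}-\log 2d)$, where $t_1=\log\tfrac{1}{1-p_1}$. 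With these two routine additions (and taking the maximum of the resulting thresholds with the one you already need for $\hat A_n$), your proof is complete, and its treatment of $A'_n$, $A''_n$, $\hat A_n$ matches the paper's conclusions.
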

Let $c>1$ be as in Lemma \ref{lemma:exponentialsecondbound}.
  Define for $n \in \N$
\begin{align}
\beta^0_n(u,v)&=\inf\{k: k\vv+nu\in\mc{C}_o[\Lambda_{u,v}(-\infty,n)]\},\label{beta0}\\
 \beta^1_n(u,v)&=\inf\{k: k\vv+nu\in\mc{C}_o[\Lambda_{u,v}(-\infty,\lfloor nc\rfloor+n)]\},\label{beta1}
\end{align}
so  $\beta_n(u)\le \beta^1_n(u,v)\le \beta^0_n(u,v)\le B_n(u,v)$, and therefore $\limsup\frac{\beta_n^1(u,v)}{n}\le \limsup\frac{\beta_n^0(u,v)}{n}\le\gamma(u)$. 
\begin{lemma}
	\label{lem:beta0}
For $p>p_1(d)$, $\frac{\beta_n^0(u,v)}{n}\to\gamma(u)$ $\P_p$-a.s. and in $L^1$.
\end{lemma}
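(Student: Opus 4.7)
The plan is as follows. The upper bound $\limsup_n \beta_n^0(u,v)/n \le \gamma(u)$ $\Pp$-a.s.\ is immediate from the sandwich $\beta_n^0 \le B_n$ noted just before the lemma, combined with Lemma \ref{lem:B_n}. A uniform integrable envelope for $|\beta_n^0/n|$ comes from $\beta_n^0 \le B_n \le n\|u\|_1$ (Lemma \ref{momentbounds}, for $n\ge n_0$) together with the cone containment of Lemma \ref{SAWargument}, which via $\mc{C}_o\subset K_0-M\vv$ forces $\beta_n^0 \ge -M - n(u\cdot\vv)/d$ (with $M$ having exponential tails). Given this envelope, once a.s.\ convergence $\beta_n^0/n\to\gamma(u)$ is established, dominated convergence will upgrade it to an $L^1$ limit, so the real task is the a.s.\ liminf.

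To obtain this, fix $\delta>0$ and consider $E_n=\{\beta_n^0(u,v)<n(\gamma(u)-\delta)\}$. The plan is to show $\Pp(E_n\text{ i.o.})=0$; applying this along a countable $\delta_j\downarrow 0$ then gives $\liminf_n \beta_n^0/n\ge\gamma(u)$. On $E_n$, there is a self-avoiding path $\pi$ from $o$ to some $nu+k\vv$ with $k<n\gamma(u)$ lying in $\Lambda_{u,v}(-\infty,n)$. Because $k<n\gamma(u)$, Lemma \ref{lemma:exponentialsecondbound} and Borel--Cantelli force (a.s.\ for $n$ large) $\pi\subset\Lambda_{u,v}(-\lfloor cn\rfloor,n)$. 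The next step is to decompose $\pi$ at its leftmost vertex $w$ in the $v$-direction, writing $w=-mu+s$ with $m\in\{0,\dots,\lfloor cn\rfloor\}$ and $s\cdot v\in[0,u\cdot v)$. By translation-invariance of the environment, the sub-path of $\pi$ from $w$ yields (in an identically-distributed environment) a consistent path from $o$ to $(n+m)u+k\vv-s$ inside $\Lambda_{u,v}(0,n+m)$, so $k$ is an upper bound on the generalised $B$-type quantity $\tilde B_{n+m,s}:=\inf\{k':(n+m)u+k'\vv-s\in\mc{C}_o[\Lambda_{u,v}(0,n+m)]\}$. Lemma \ref{SAWargument} restricts $s$ (and, when $u\cdot\vv>0$, also $m$) to a region of polynomial volume in $n$; so after a union bound, $E_n$ is contained---modulo a summable bad event---in a polynomial union over $(m,s)$ of the events $\{\tilde B_{n+m,s}<n(\gamma(u)-\delta)\}$.

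Each such event asks for $\tilde B_{n+m,s}$ to fall below its asymptotic value $(n+m)\gamma(u)$ by a macroscopic amount. The main obstacle is therefore to establish an exponential \emph{lower}-tail large-deviation estimate for $\tilde B_{n',s}$, uniform in the relevant range of $s$: Lemma \ref{lemma:exponentialfirstbound} supplies only the upper tail of $B_{n'}$. The natural route is to slice $\Lambda_{u,v}(0,n')$ into $\Theta(n')$ disjoint transverse strips of $O(1)$ width, whose environments are independent under $\Pp$, to show that each strip contributes at least $\gamma(u)-\delta/2$ to the $\vv$-cost of any slab-crossing with exponentially small failure probability, and to conclude via a Cram\'er-type bound---with uniformity in $s$ coming from Lemma \ref{SAWargument}. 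Given that estimate, summability in $n$ and Borel--Cantelli close the argument, and taking $\delta\downarrow 0$ along a countable sequence yields $\liminf_n\beta_n^0/n\ge\gamma(u)$ a.s. This lower-tail estimate is the main technical hurdle, and is presumably why the hypothesis must be $p>p_1(d)$ rather than $p>p_c$.
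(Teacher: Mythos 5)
Your upper bound ($\limsup_n\beta^0_n/n\le\gamma(u)$ via $\beta^0_n\le B_n$) and the integrable envelope are fine, but the heart of your argument --- the almost-sure liminf via Borel--Cantelli on $E_n$ --- rests on an ingredient you do not have: an exponential \emph{lower}-tail large deviation estimate $\Pp\big(\tilde B_{n',s}<n'(\gamma(u)-\delta)\big)\le e^{-cn'}$, uniform in $s$, at the exact constant $\gamma(u)$. Nothing in the paper supplies this (Lemma \ref{lemma:exponentialfirstbound} is an upper-tail bound, and its proof exploits subadditivity $B_n\le\sum_k Y_k$, which only helps in that direction), and the route you sketch does not work: slicing $\Lambda_{u,v}(0,n')$ into $O(1)$-wide transverse strips gives at best $B_{n'}\ge\sum_i b_i$, where $b_i$ is the $\vv$-cost of crossing strip $i$ minimised over \emph{all} admissible entry and exit points. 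That is a hyperplane-to-hyperplane quantity; even after confining the path laterally to a box of size $O(n')$ by Lemma \ref{SAWargument}, the infimum is over polynomially many entry points and its mean has no reason to be near the corresponding fraction of the point-to-point constant $\gamma(u)$. A Cram\'er bound on $\sum_i b_i$ therefore concentrates around a (possibly strictly smaller) constant and gives nothing at level $n'(\gamma(u)-\delta)$. This is the standard obstruction to lower-tail deviations for subadditive quantities, and it is precisely what the paper's proof is engineered to avoid; acknowledging it as ``the main technical hurdle'' does not close the gap.

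The paper's route is softer and needs no lower-tail bound. It first proves convergence \emph{in probability}: with $M=\lfloor cn\rfloor$, $g=\lceil n(c\gamma(u)+2\epsilon)\rceil$, $h=\lfloor n(\gamma(u)-3\epsilon)\rfloor$ and $z=-g\vv-Mu$, if the slab-restricted quantity $B^0_n$ (paths confined to $\Lambda_{u,v}(-M,n)$) dips below $n(\gamma(u)-3\epsilon)$, then either there is no path from $z$ to $o$ inside that slab --- an event of vanishing probability because, by translation invariance, it is controlled by $\Pp(B_M>g)$ and $g\ge M\gamma(u)+n\epsilon$, so Lemma \ref{lem:B_n} applies --- or concatenation produces a path from $z$ forcing $B_{M+n}\le g+h\le(M+n+1)\gamma(u)-n\epsilon+1$, again of vanishing probability by Lemma \ref{lem:B_n}; paths exiting $\Lambda_{u,v}(-M,n)$ are handled by $A'_n$ and Lemma \ref{lemma:exponentialsecondbound}. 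This yields $\beta^0_n/n\to\gamma(u)$ in probability, hence in $L^1$ by your envelope. The almost-sure statement is then obtained not by Borel--Cantelli but by rerunning Liggett's proof of the subadditive ergodic theorem for $\beta^0_n$ itself: $\beta^0_n\le\beta^0_m+\tilde B_{m,n}$ with increments having the correct stationarity and independence; the only failing hypothesis is that $\beta^0_n$ and $\tilde B_{m,m+n}$ have different laws, and this is repaired by the substitute for Liggett's (2.1), namely $\lim_n\E_p[\beta^0_n]/n=\inf_n\E_p[B_n]/n=\gamma(u)$, which is exactly what the $L^1$ convergence provides. To salvage your scheme you would have to prove the missing lower-tail estimate from scratch, which is substantially harder than the lemma itself.
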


\begin{lemma}
	\label{lem:beta1}
	For $p>p_1(d)$,
	$\frac{\beta_n^1(u,v)}{n}\to\gamma(u)$ a.s.
\end{lemma}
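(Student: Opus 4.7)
The plan is to exploit the sandwich $\beta_n(u)\le\beta_n^1(u,v)\le\beta_n^0(u,v)$. The upper bound $\limsup_{n\to\infty}\beta_n^1(u,v)/n\le\gamma(u)$ $\Pp$-a.s.\ is then immediate from Lemma~\ref{lem:beta0}, so the entire task is to establish the matching lower bound $\liminf_{n\to\infty}\beta_n^1(u,v)/n\ge\gamma(u)$ $\Pp$-a.s. I will do this by fixing $\epsilon>0$ and showing that the event $\{\beta_n^1(u,v)<n(\gamma(u)-\epsilon)\}$ occurs only finitely often.

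On this event, there is a path $\pi$ consistent with the environment from $o$ to some $k\vv+nu$ with $k<n(\gamma(u)-\epsilon)<n\gamma(u)$, contained in $\Lambda_{u,v}(-\infty,\lfloor cn\rfloor+n)$. The argument splits into two cases. In \emph{Case~A}, $\pi\subset\Lambda_{u,v}(-\infty,n)$, so $\beta_n^0(u,v)\le\beta_n^1(u,v)<n(\gamma(u)-\epsilon)$, which by Lemma~\ref{lem:beta0} occurs only finitely often. In \emph{Case~B}, $\pi$ makes a ``detour'' into $\Lambda_{u,v}(n,\lfloor cn\rfloor+n)$ before returning to the target line $\{k\vv+nu:k\in\Z\}$. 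To handle Case~B, I would decompose $\pi$ at its rightmost vertex $w$ and partition on the integer $m\in\{n,\dots,\lfloor cn\rfloor+n-1\}$ with $w\in\Lambda_{u,v}(m,m+1)$. For each such $m$, the detour-and-return behaviour of $\pi$ forces a long backwards excursion in the $v$- or $\vv$-direction, which in the large-$p$ regime should be exponentially unlikely in $n$ by the self-avoiding-walk enumeration techniques underlying Lemmas~\ref{SAWargument} and~\ref{lemma:exponentialsecondbound}. Summing these exponential bounds over the $O(n)$ choices of $m$ still yields a probability summable in $n$, so Borel--Cantelli rules out Case~B infinitely often.

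The main technical obstacle is producing the per-slab exponential bound uniformly in $m$, so that the enumeration over $m\in\{n,\dots,\lfloor cn\rfloor+n-1\}$ does not swamp the decay; equivalently, I need detours reaching any depth between $n$ and $\lfloor cn\rfloor+n$ to be uniformly costly. I expect this to follow by translating the sub-path of $\pi$ from $w$ to $k\vv+nu$ so that the terminal point becomes the origin, recasting the ``detour and return'' as an $\hat A$-type backward excursion in a shifted (but identically distributed) environment, after which the exponential decay of $\hat A_n$ in Lemma~\ref{lemma:exponentialsecondbound} can be applied directly. Combining the almost-sure convergence from Case~A with the Borel--Cantelli control of Case~B completes the proof.
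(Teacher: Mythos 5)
Your Case~A is fine, and your overall sandwich set-up matches the paper's starting point. But Case~B has a genuine gap that the proposed fix does not close. A path witnessing $\beta^1_n(u,v)<n(\gamma(u)-\epsilon)$ can stray into $\Lambda_{u,v}(n,\lfloor cn\rfloor+n)$ by only one or two slabs before returning to the line $\{k\vv+nu: k\in\Z\}$. Such a shallow foray does \emph{not} force a long backward excursion in either the $v$- or the $\vv$-direction, so the self-avoiding-walk enumeration gives no exponential decay in $n$, and the union bound over $m$ does not yield a summable series. The proposed reduction to an $\hat A$-type event fares no better: $\hat A_n$ concerns paths that travel at least $n$ slabs forward \emph{and} finish strictly below their own starting level, whereas here the constraint $k<n(\gamma(u)-\epsilon)$ is measured against $n\gamma(u)$, not against the launching point of the foray, and the foray may be arbitrarily shallow. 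Translating the terminal point to the origin does not reproduce the geometry on which Lemma~\ref{lemma:exponentialsecondbound} relies.

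The paper circumvents this by a forward-extension contradiction rather than a direct cost estimate on detours. One fixes $\delta<\epsilon/(1+5c)$ and considers the event $D_n$ that $\beta^0_{M+n}(u,v)>(M+n)(\gamma(u)-\delta)$ while $\beta^1_n(u,v)\le n(\gamma(u)-\epsilon)$, with $M=\lfloor nc\rfloor$. On $D_n$ there is a path in $\Lambda_{u,v}(-\infty,M+n)$ from $o$ to $w_{i_0}=i_0\vv+nu$ with small $i_0$. If this path could be extended from $w_{i_0}$ through $\Lambda_{u,v}(n,M+n)$ to a point $k\vv+(M+n)u$ with $k\le (M+n)(\gamma(u)-\delta)$, concatenation would contradict the first condition of $D_n$; hence the extension must fail. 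But by translation invariance, the failure of a forward extension from $w_i$ has probability $\Pp\bigl(B_M(u,v)\ge (M+n)(\gamma(u)-\delta)-i\bigr)\le\Pp\bigl(B_M(u,v)\ge M(\gamma(u)+4\delta)\bigr)$, which is exponentially small by Lemma~\ref{lemma:exponentialfirstbound} — a lemma your proposal does not invoke and which is in fact the crucial ingredient here. Summing over the polynomially many admissible $i_0$ (and controlling the far-left tail via Lemma~\ref{SAWargument}) shows $\sum_n\Pp(D_n)<\infty$, and Lemma~\ref{lem:beta0} guarantees that the first condition of $D_n$ holds eventually, which together force $\beta^1_n(u,v)\ge n(\gamma(u)-\epsilon)$ for all large $n$. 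In short, the missing idea is to bound the event not by the cost of the detour itself, but by the unlikelihood of the forward continuation being blocked, quantified by the large-deviation estimate on $B_M$.
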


The above lemmas will be proved in Section \ref{sec:lemmas}.  Assuming the above lemmas, we are now ready to prove our 
main result.

\begin{proof}[Proof of Theorem \ref{shapetheorem}]
	$ $\\
\smallskip

\noindent{\bf (a):}  Let $\epsilon>0$ and $M_n=\lfloor nc \rfloor$, where $c$ is the constant of Lemma \ref{lemma:exponentialsecondbound}.  If $\frac{\beta_n(u)}{n}< \gamma(u)-2\epsilon$ then there is a self avoiding path from $o$ to some point $k\vv+nu$ with $k<n(\gamma(u)-2\epsilon)$. Such a path must either stay in $\Lambda_{u,v}(-\infty,M_n+n)$ or it must reach $\Lambda_{u,v}(M_n+n,\infty)$ (i.e.~$A_n''(M_n)$ occurs). By Lemmas \ref{lem:beta1} and  \ref{lemma:exponentialsecondbound} this is only possible for finitely many $n$. Therefore $\frac{\beta_n(u)}{n}\ge \gamma(u)-2\epsilon$ for all sufficiently large $n$ a.s.   Together with \eqref{betalimsup} this verifies (a), and in particular it shows that $\gamma(u)$ does not depend on the choice of $v$ satisfying \eqref{wlogstuffforu}.

\medskip

\noindent{\bf (b):}  Let $r\in \N$, and let $m=nr$.   From the definition of $\beta_n(u)$ we have that $\beta_{m/r}(ru)=\beta_m(u)$.  From this and part (a) it follows that 
\begin{equation}
\label{homothety}
\gamma(ru)=r\gamma(u)
\end{equation} 
for $r\in\mathbb{N}$.   This holds for $r=0$ as well, by \eqref{fishy}. The assertion that $\gamma(u+s\vv)=\gamma(u)-s$ for all $s\in \Z$ was verified in the paragraph prior to \eqref{fishy}. 

Symmetry of $\gamma$ under coordinate permutations is also straightforward, and the statement that $\gamma(u)\le 0$ when $u$ lies in the positive orthant follows from the fact that $\mc{C}_o$ contains this orthant.  Setting 
$\eta=0$ in Lemma \ref{SAWargument} shows that if $p>p_0(d)$ then there is an  $A\ge 0$ such that $\mc{C}_o\subset K_0-A\vv$. If $j\vv+nu\in K_0-A\vv$ then 
$[(A+j)\vv +nu]\cdot\vv\ge 0$ so $j\ge -A - \frac{n}{d}u\cdot \vv$. This proves that $\gamma(u)\ge -\frac{1}{d} u\cdot \vv$, so $\gamma(u)\ge 0$ whenever $u\cdot \vv\le 0$.

The proof that $\gamma$ is subadditive requires more care. We start by showing that $\gamma(u+w)\le \gamma(u)+\gamma(w)$  in the case that there is a $v\in\mathbb{R}^d$ such that $u\cdot v>0$, $w\cdot v>0$, and $v\cdot \vv=0$. Let 
\begin{equation}
\hat\Lambda_{u,v,w}(n)=\{z\in \mathbb{Z}^d: nu\cdot v\le z\cdot v< n(u+w)\cdot v\},\label{hatlambda}
\end{equation} 
and take (for $n \in \N$)
\begin{equation}
\hat B_n(u,v,w)=\inf\{k: (B_n(u,v)+k)\vv+n(u+w)\in\mc{C}_{B_n(u,v)\vv+nu}[\hat\Lambda_{u,v,w}(n)]\}
\end{equation}
Because $\Lambda_{u,v}(0,n)$ and $\hat\Lambda_{u,v,w}(n)$  do not overlap, we know that the environments in $\hat\Lambda_{u,v,w}(n)$ are independent of $B_n(u,v)$. Therefore $\hat B_n(u,v,w)$ has the same law as $B_n(w)$, and it follows from Lemma \ref{lem:B_n} that $\frac{1}{n}\hat B_n(u,v,w)$ converges in probability to $\gamma(w)$. Of course, $\frac{1}{n}B_n(u,v)$ and $\frac{1}{n}B_n(u+w,v)$ converge in probability to $\gamma(u)$ and $\gamma(u+w)$.  Now  $\hat\Lambda_{u,v,w}(n)\cup \Lambda_{u,v}(0,n) \subset \Lambda_{u+w,v}(0,n)$, so by concatenating paths we know that 
$$
B_n(u+w,v)\le B_n(u,v)+\hat B_n(u,v,w).
$$
Therefore $\gamma(u+w)\le \gamma(u)+\gamma(w)$. 

It remains to show subadditivity when such a $v$ does not exist. We start with the case $w=-u$, and show that 
\begin{equation}
\label{gammaofnegativeu}
\gamma(u)+\gamma(-u)\ge 0  \qquad (=\gamma(u+(-u)).
\end{equation}
Suppose that \eqref{gammaofnegativeu} fails.  Let $\epsilon>0$ be such that $\gamma(u)+\gamma(-u)+2\epsilon<0$, and $n_\epsilon\in \N$ be such that $n_\epsilon>-2/(\gamma(u)+\gamma(-u)+2\epsilon)$. Choose $v$ so $u\cdot v>0$ and $v\cdot\vv=0$.  
For $n\gg n_\epsilon$, set $k=\lceil n(\gamma(u)+\epsilon)\rceil$, $j=\lceil n(\gamma(-u)+\epsilon)\rceil$,  $z=nu+k\vv$, and $y=(j+k)\vv$. Then with high probability there is a path in $\mc{C}_o$ from $o$ to $z$, and a path in $\mc{C}_z$ from $z$ to $y$. Concatenating them gives a path from $o$ to $y$ that reaches $\Lambda_{u,v}(n,\infty)$. But $j+k\le n[\gamma(u)+\gamma(-u)+2\epsilon]+2<0$, and therefore $\Pp(\hat A_n)\to 1$, which contradicts Lemma 
\ref{lemma:exponentialsecondbound}. Thus  \eqref{gammaofnegativeu} holds. 

Now consider the general case that $u,w\in\mathbb{Z}^d$, yet no $v$ exists as above. Write $u=s\vv +u'$ and $w=t\vv + w'$ where $u'$ and $w'$ are $\perp \vv$. Then $s=\frac{1}{d}u\cdot\vv$ so $du'\in\mathbb{Z}^d$, and likewise $dw'\in\mathbb{Z}^d$. The half-spaces $\{z: z\cdot u'>0\}$ and $\{z: z\cdot w'>0\}$ do not intersect, as if they did, we could find a $v$ in their intersection, that is $\perp \vv$. Therefore in fact, $u'$ and $w'$ point in opposite directions. 
Thus there is an $x\in\mathbb{Z}^d$ and $i,j\in\mathbb{Z}$ such that $du'=ix$ and $dw'=jx$. 	By interchanging $u$ and $w$ if necessary, we may assume that $|i|\le |j|$.  Replacing $x$ by $-x$ if necessary, we may also assume that $j\ge 0\ge i$. Then using the fact that $i+j\in \Z_+$ we get 
\begin{align*}
\gamma(du+dw)&=\gamma(du'+dw')-d(s+t)\\
&=\gamma((i+j)x)-d(s+t)\\
&=(i+j)\gamma(x)-d(s+t)\\
&=i\gamma(x)-ds+j\gamma(x)-dt\\
&=-\gamma(-ix)-ds +\gamma(jx)-dt\\
&\le \gamma(ix)-ds+\gamma(jx)-dt\\
&=\gamma(du)+\gamma(dw).
\end{align*}	
Therefore $\gamma(u+w)\le \gamma(u)+\gamma(w)$ as required.  
\medskip

\noindent{\bf (c) and (d):}
It is now simple to extend the definition of $\gamma(u)$ to $u\in\mathbb{Q}^{d}$, by taking the limit over those $n$ for which $nu\in \mathbb{Z}^{d}$.  Then some algebra shows that (b) can be extended to $u,w\in \Q^d$ and $r \in \Q_+$.

From subadditivity, we see that
$$
|\gamma(w)-\gamma(u)|\le \max(|\gamma(u-w)|, |\gamma(w-u)|). 
$$
Therefore Lemma \ref{momentbounds} (which applies only to $\Z^d$) together with \eqref{homothety}  implies that  for $u,w\in \Q^d$,
\begin{equation}
|\gamma(u)-\gamma(w)|\le \|u-w\|_1.
\label{eqn:lipschitzcondition}
\end{equation}  
This in turn implies the existence of an extension of $\gamma$ to $\R^d$ that is uniformly Lipschitz 
(for $u \in \R^d$ take $\gamma(u)$ to be the limit of a sequence $\gamma(u_m)$ where $u_m$ are rationals approaching $u$).  
One can now verify that (b) holds with $u,w\in \R^d$ and $r\in \R_+$, by taking limits via sequences of rationals  $u_m,w_m,r_m$ converging to $u,w,r$ respectively.   This shows (c).  Now (d) also follows in turn, using the properties (b) now established on $\R^d$, $\R$:  $C$ is closed because $\gamma$ is continuous; it is a cone because if $x\in C$ and $r\ge 0$ then $\gamma(rx)=r\gamma(x)\le 0$, so $rx\in C$ (so $C$ is the cone with shape $\chi=\{x\in C:x\cdot \vv=1\}$); $C$ is convex because if $x,y\in C$ and $s\in (0,1)$ then $sx\in C$ and $(1-s)y \in C$ and $\gamma(sx+(1-s)y)\le \gamma(sx)+\gamma((1-s)y)\le 0$, so $sx+(1-s)y\in C$.  The remaining assertions of (d) are straightforward.
\medskip

	\noindent{\bf (e):}
Fix $r$ and $\epsilon$, and an integer $m>\frac{4r}{\epsilon}$.  The set $\mc{U}:=O_{m+2}$ is finite, 
so by part (a) we may find a random $J$ such that if $j\ge J$ then $|\frac{\beta_j(u)}{j}-\gamma(u)|<\frac12$ for every $u\in\mc{U}$. We will show the assertion of (e) for every $n$ such that 
\begin{equation}
\label{boundonn}
n\ge\max(\frac{Jm}{r}, \frac{4}{\epsilon}),
\end{equation}
so suppose $n$ satisfies \eqref{boundonn}. Set $k=\lceil \frac{nr}{m}\rceil\ge \frac{nr}{m}\ge J$.

We first show that $O_r\cap C \subset O_\epsilon+n^{-1}\mc{C}_o$.  
Let $z\in O_r\cap C$, so $\gamma(z)\le 0$. We know that if some $y\in \R^d$ satisfies $\|y\|_\infty\le m$ then there is a $u\in\mc{U}$ with $u^{\sss[i]}\le y^{\sss[i]}\le u^{\sss[i]}+1$ for every $i\in[d]$, and moreover that $u'=u+2\vv\in\mc{U}$.  Since $\|\frac{n}{k}z\|_\infty\le \frac{nr}{k}\le m$ we may choose $u$ and $u'$ as above, for $y=\frac{n}{k}z$. In particular,
$$
\|\frac{k}{n}u'-z\|_\infty
=\frac{k}{n}\|u'-\frac{n}{k}z\|_\infty\le\frac{2k}{n}
\le \frac{2}{n}(\frac{nr}{m}+1)<\epsilon
$$
by the choice of $m$ and the fact that $\frac{1}{n}<\frac{\epsilon}{4}$. So we need only show that $ku'\in\mc{C}_o$. 

Now $\gamma(z)\le 0$, so $\gamma(\frac{n}{k}z)\le 0$, and because $u+\vv-\frac{n}{k}z$ lies in the positive quadrant, also $\gamma(u+\vv-\frac{n}{k}z)\le 0$. By subadditivity it follows that $\gamma(u+\vv)\le 0$ and hence $\gamma(u')\le -1$. Because $k\ge J$, our choice of $J$ implies that $\beta_k(u')<0$, so in fact, $ku'\in\mc{C}_o$ as required.

Conversely, consider $z\in O_r\cap\frac{1}{n}\mc{C}_o$, so that by definition, there is a path consistent with the environment, from $o$ to $nz$. 
Construct $u,u'\in\mc{U}$, as above, so $u^{\sss[i]}\le (\frac{n}{k}z)^{\sss[i]}\le u^{\sss[i]}+1$ for each $i\in[d]$. Therefore $k(u+\vv)-nz$ belongs to the positive orthant, so there is also a path, consistent with the environment running from $nz$ to $k(u+\vv)$.  Concatenating the two paths, we see that $\beta_k(u+\vv)=\beta_1(k(u+\vv))\le 0$.  Recalling \eqref{beta+1}, it follows that 
$$
\frac{\beta_k(u')}{k}=\frac{\beta_k(u+\vv)}{k}-1\le -1.
$$
Since $k\ge J$, we obtain that $\gamma(u')\le -\frac12<0$. Therefore $u'\in C$ and in turn $\frac{k}{n}u'\in C$. As above, $\|\frac{k}{n}u'-z\|_\infty<\epsilon$, and the proof is complete.
\end{proof}

While the approach we have taken bears some relation to the subadditivity arguments of first passage percolation, it is in many ways is closer to the approach used for oriented percolation (see \cite{Dur84}). Our setting is more challenging, in part because there is no direction in which our model is fully oriented. For oriented percolation, the orientation immediately gives independence properties, which give rise to the stationarity required by the subadditive ergodic theorem. In our setting, paths can wander away and then return, and we must control the probabilities of that happening.

There is an alternative approach to our results that suggests itself, which is more closely aligned with first passage percolation. Namely to study $T(x,y)$ defined as the number of steps it takes, consistent with the environment, to travel from $x$ to $y$. Then $\mc{C}_o=\{y: T(o,y)<\infty\}$. Some aspects of the analysis become easier in this context (for example, subadditivity). But other problems arise, due to the fact that $T(x,y)$ can be infinite -- an issue also faced in \cite{Mourrat} and \cite{CerfTheret}. In our setting, the analysis of $T(x,y)$ appears even less tractable than in those works, because we have little control of $T(o,y)$ as $y$ approaches the boundary of $\mc{C}_o$. The approach taken in the current paper is designed specifically to deal with that boundary.

\section{Proofs of Lemmas}
\label{sec:lemmas}
\begin{proof}[Proof of Lemma \ref{SAWargument}.] We let the constant $c$ vary from line to line. There is a $\theta>1$ such that the number of self avoiding paths in $\mathbb{Z}^d$ from $o$ of length $n$ is at most $\theta^n$. For $m\in \Z_+$, let $\partial_{\eta,m}$ be the set of vertices $x$ adjacent to some vertex in $K_\eta-m\vv$ but not in $K_\eta-m\vv$. 
	
	If $\mathcal{C}_o\not\subset K_\eta-m\vv$ there is an $x\in\partial_{\eta,m}$  and a self avoiding path $\omega$ from $o$ to $x$ consistent with the environment. Set $x^+=\sum_{i=1}^d \max\{x\cdot e_i,0\}$ and $x^-=-\sum_{i=1}^d \min\{x\cdot e_i,0\}$. For some $k\ge 0$, $\omega$ takes $k+x^+$ steps in a direction from $\mc{E}_+$, and $k+x^-$ steps in a direction from $\mc{E}_-$. Therefore at least $k+x^-$ vertices come from $\Omega_-$.  If  $\theta^2(1-p)<1$ then we obtain
	\begin{align*}
	\Pp(\mathcal{C}_o\not\subset K_\eta-m\vv)
	&\le \sum_{x\in\partial_{\eta,m}}\sum_{k=0}^\infty \theta^{\|x\|_1+2k}(1-p)^{k+x^-}\\
	&\le c\sum_{x\in\partial_{\eta,m}}\theta^{\|x\|_1}(1-p)^{x^-}.
	\end{align*}

	For $x=y-m\vv$ we have $x\cdot \vv=y\cdot \vv -md$ and $\|y\|_1\le \|x\|_1+md$. If $x\notin K_\eta-m\vv$ then $y\notin K_\eta$ so 
	\begin{align*}
	x^+-x^-=x\cdot\vv=y\cdot\vv-md<\eta\|y\|_1-md& \le \eta\|x\|_1-md(1-\eta)\\
	& =\eta(x^++x^-)-md(1-\eta).
	\end{align*}
	Therefore $x^+<\frac{1+\eta}{1-\eta}x^- -md$ so $\|x\|_1=x^++x^-<\frac{2}{1-\eta}x^- -md$. 
	The number of such $x$ with $x^-=j$ is therefore at most $cj^d$,  uniformly in $m$. 
	Therefore 
	$$
	\Pp(\mathcal{C}_o\not\subset K_\eta-m\vv)
	\le c\theta^{-md}\sum_{j=0}^\infty[\theta^{\frac{2}{1-\eta}}(1-p)]^jj^d.
	$$
	Provided $p>1-\theta^{-\frac{2}{1-\eta}}$, this gives us a bound $c\theta^{-md}$, as claimed.
\end{proof}

\begin{proof}[Proof of Lemma \ref{momentbounds}.]
	For the first inequality, take $\eta=0$ in Lemma \ref{SAWargument} to see that if $p>p_0(d)$ then there is an integrable $A\ge 0$ such that $\mc{C}_o\subset K_0-A\vv$. Thus if $k=\inf\{j\in\Z: j\vv+nu\in K_0-A\vv\}$, it follows that $B_n(u,v)\ge\beta_n(u)\ge k$ and so $(B_n(u,v))^-\le(\beta_n(u))^-\le k^-$. But $j\vv+nu\in K_0-A\vv$ $\Leftrightarrow$ 
	$[(A+j)\vv +nu]\cdot\vv\ge 0$ $\Leftrightarrow$ $j\ge -A - \frac{n}{d}\sum u_i$. If this holds then $j\ge -A-\frac{n}{d}\|u\|_1$, so in particular, $k\ge  -A-\frac{n}{d}\|u\|_1$ and therefore 
	\begin{equation}
	\label{eqn:betaminus}
	(\beta_n(u))^-\le A+\frac{n}{d}\|u\|_1.
	\end{equation} 
	Therefore the desired inequality holds, with $\Gamma=\Ep[A]$. 
	
	We know that $\mc{C}_o$ contains the positive orthant, so $k\vv+nu\in\mc{C}_o$ whenever all its coordinates are non-negative. In other words, 
	\begin{equation}
	\label{eqn:betaplus}
	\beta_n(u)\le n\max_i\{(u^{\sss[i]})^-\}\le n\|u\|_1
	\end{equation}
	for every $n$. Combined with \eqref{eqn:betaminus}, this shows that the $\sup_n \frac{|\beta_n(u)|}{n}\le Y$, where $Y=A+(1+1/d)\|u\|_1$ is integrable.
	
	To complete the second set of inequalities of the Lemma, we use the same argument that gave us \eqref{eqn:betaplus}, but this time for $B_n(u,v)$. To make this work, we must show that if $x=k\vv+nu$ lies in the positive orthant, then it also lies in $\mc{C}_o[\Lambda_{u,v}(0,n)]$. We will be able to carry this out for $n\ge n_0$, where $n_0$ is chosen so that $n_0 u\cdot v>2\|v\|_\infty$. 
	\footnote{To show that such a restriction on $n$ is actually needed, suppose $d=3$, $v\cdot e_2<0$, $v\cdot e_3<0$, and $v\cdot e_1>-v\cdot e_2>0$. Take $u=-100\vv + e_1+e_2$, $k=100$, and $n=1$. Then $x=e_1+e_2$ and $u\cdot v=(e_1+e_2)\cdot v>0$. But if $o\in\Omega_+$ then we are unable to start a path that reaches $x$ via $\Lambda_{u,v}(0,1)$, since any initial step $e_i$ will exit $\Lambda_{u,v}(0,1)$.}
	
	We will construct a path $y_0,y_1,\dots,y_m$ from $y_0=o$ to $y_m=x$, where at each stage $y_{j+1}=y_j+e_{i_j}$ for some $i_j\in [d]$. The $i_j$'s will be chosen so that $0\le y_j\cdot v< nu\cdot v$ (for $j\neq m$), and  $0\le y_j^{\sss[i]}\le x^{\sss[i]}$ for each $i,j$. This shows that $x\in \mc{C}_o[\Lambda_{u,v}(0,n)]$. 
	
	Suppose we've got as far as $y_j$, and $y_j\neq x$. Consider $I=\{i\in [d]: y_j^{\sss[i]}< x^{\sss[i]}\}$, which must be non-empty (since $y_j\neq x$). If 
	\begin{equation}
	\|v\|_\infty\le y_j\cdot v< nu\cdot v-\|v\|_\infty, 
	\label{middleregioneqn}
	\end{equation}
	then choose any $i\in I$ as $i_j$. Since 
	$$
	0\le y_j\cdot v-\|v\|_\infty\le (y_j+e_i)\cdot v\le y_j\cdot v+\|v\|_\infty< nu\cdot v,
	$$
	the iteration continues. 
	
	If \eqref{middleregioneqn} fails then either $y_j\cdot v< \|v\|_\infty$ or $y_j\cdot v\ge nu\cdot v-\|v\|_\infty$. In the first case, choose $i_j$ to be any $i\in I$ with $e_i\cdot v>0$.  Note that there must be some such $i$, as otherwise 
	\begin{align*}
	x\cdot v&=\sum_{i \in I}x^{\sss[i]}v^{\sss[i]}+\sum_{i \notin I}y^{\sss[i]}_jv^{\sss[i]}\\
	&\le \sum_{i \in I}y^{\sss[i]}_jv^{\sss[i]}+\sum_{i \notin I}y^{\sss[i]}_jv^{\sss[i]}\\
	&= y_j\cdot v\le \|v\|_\infty,
	\end{align*}
	which contradicts the choice of $n_0$ (recall that $x\cdot v=nu\cdot v$).  Then $0\le y_j\cdot v<(y_j+e_i)\cdot v\le 2\|v\|_\infty<nu\cdot v$, because $n\ge n_0$, and again, the iteration continues. In the second case, choose $i\in I$ with $e_i\cdot v\le 0$, if possible. If there is indeed such an $i$, then $0<nu\cdot v-2\|v\|_\infty\le (y_j+e_i)\cdot v\le y_j\cdot v< nu\cdot v$ (again because $n\ge n_0$), and once more, the iteration continues. Finally, if in the second case all $i\in I$ have $e_i\cdot v> 0$ then choose any $i\in I$ as $i_j$. As long as $j<m$, $x$ is $y_j$ plus a sum of multiple such $e_i$, so we'll have $0\le y_j\cdot v< (y_j+e_i)\cdot v< x\cdot v= nu\cdot v$, and again, the iteration continues.
	
	From the above we have $|B_n(u,v)|\le A+n \|u\|_1$.  Finiteness of the moment generating function of $B_n(u,v)$ now follows, because Lemma \ref{SAWargument} establishes an exponential bound for the tail of the random variable $A$.
\end{proof}

\begin{proof}[Proof of Lemma \ref{lem:B_n}]
	For $n\in \N$, let $B_{0,n}(u,v)=B_n(u,v)$.  For $n>m>0$ let 
	\begin{equation}
B_{m,n}(u,v)=\inf\{k: (B_m(u,v)+k)\vv+nu\in\mc{C}_{B_m(u,v)\vv+mu}[\Lambda_{u,v}(m,n)]\}.\label{Bmn}
	\end{equation}
Then
	\begin{equation}
	B_n(u,v)\le B_m(u,v)+B_{m,n}(u,v),\label{Bsub}
	\end{equation}
	because concatenating the given paths from $o$ to $B_m(u,v)\vv+mu$ and from there to $(B_m(u,v)+B_{m,n}(u,v))\vv+nu$ produces a path from $o$ to the latter point, that also stays in $\Lambda_{u,v}(0,n)$. 
	
	We will apply the subadditive ergodic theorem (in a version due to Liggett \cite{Liggett}) to the $B_n(u,v)$. The stationarity properties required there follow easily from our construction, and an independence argument. Specifically, the fact that $\Lambda_{u,v}(0,m)$ and $\Lambda_{u,v}(m,n)$ are disjoint means that their environments are independent. Therefore if we shift the environments of $\Lambda_{u,v}(m,n)$ by a vector $Z\vv$, where $Z$ is determined by the environments of $\Lambda_{u,v}(0,m)$, then we get an environment whose law is the same, and which is still independent of the environments of $\Lambda_{u,v}(0,m)$. With $Z=B_m(u,v)$, it is this shifted environment that determines $B_{m,n}(u,v)$.  Therefore $B_{m,n}(u,v)$ is independent of $B_m(u,v)$.
	
	It follows that for every $k\ge 1$ and $m\ge 0$, $B_k$ and any $B_{m,m+k}$ have the same distribution. The remainder of hypotheses (1.8) and (1.9) of \cite{Liggett} follow similarly. Note that later on, in the proof of Lemma \ref{lem:beta0}], we will encounter a similar situation where the analogous condition holds only for $m\ge 1$, but not for $m=0$. This means that the result of \cite{Liggett} won't apply there, something we'll have to work around when we get to that point.
	
	It remains only to check the moment hypotheses in \cite{Liggett} which 
	are that  $\E[|B_n(u,v)|]<\infty$ for every $n$, and $\E[B_n(u,v)]\ge - cn$ for some $c$. In fact, it is easily seen that the argument there works if we assume instead that these inequalities hold for $n\ge n_0$, where $n_0$ is fixed. 
	The one point in Liggett's proof that isn't a trivial change comes five lines after (2.4a), where he uses that $\gamma_n\le k\gamma_m + \gamma_\ell$, where $\ell$ is small and $\gamma_\ell$ is finite. But we can get around that by using $\gamma_n\le (k-i)\gamma_m + \gamma_{im+\ell}$ for some $i$ chosen so $ \gamma_{im+\ell}<\infty$.
	The above inequalities then follow from Lemma \ref{momentbounds}. A similar remark applies to the argument in \cite{Liggett} which shows that $\gamma(u)=\inf_{n\ge 1} n^{-1} \E_p[B_n(u,v)]$.

	Applying the subadditive ergodic theorem, we have obtained that there is a deterministic $\gamma(u)\in(-\infty,\infty)$ such that $\frac{B_n(u,v)}{n}\to\gamma(u)$ a.s. and in $L^1$.  
\end{proof}

\begin{proof}[Proof of Lemma \ref{lemma:exponentialfirstbound}.]
	Since $\lim\frac{\E_p[B_n(u,v)]}{n}=\inf \frac{\E_p[B_n(u,v)]}{n}=\gamma(u)$ by Lemma \ref{lem:B_n}, we may choose $j_0$ (non-random) so large that $j_0\gamma(u)\le \E_p[B_{j_0}(u,v)]\le j_0(\gamma(u)+\delta)$. We may also ensure that $j_0\ge n_0$, where $n_0$ is as in Lemma \ref{momentbounds}. Take $n'_1$ sufficiently large that $n'_1\ge j_0$ and $n'_1\delta>2j_0\|u\|_1$. 
	
	For $n\ge n'_1$, write $n=j_0K+j$ where $j_0\le j<2j_0$. Set $Y_k=B_{(k-1)j_0,kj_0}(u,v)$ and $\bar y=\E_p[Y_1]$. Then
	$$
	B_n(u,v)\le \sum_{k=1}^K Y_k+B_{j_0K,n}(u,v)
	$$
	where the $Y_k$ are independent and identically distributed.
	Because $n-Kj_0\ge j_0\ge n_0$, Lemma \ref{momentbounds} implies that $B_{j_0K,n}(u,v)\le 2j_0\|u\|_1<n\delta$.   Therefore whenever $B_n(u,v)\ge n(\gamma(u)+4\delta)$, we must also have $\sum_{k=1}^KY_k\ge n(\gamma(u)+3\delta)$.   
	
	First suppose $\gamma(u)\ge 0$. Then $n(\gamma(u)+3\delta)\ge Kj_0(\gamma(u)+3\delta)\ge Kj_0(\gamma(u)+2\delta)\ge K(\bar y+j_0\delta)$. Now assume instead that $\gamma(u)<0$. Since $n\delta>2j_0\|u\|_1\ge -2j_0\gamma(u)$, and $n\le j_0(K+2)$, we have that $n(\gamma(u)+3\delta)\ge j_0(K+2)\gamma(u)+3n\delta\ge Kj_0(\gamma(u)+2\delta)+[n\delta+2j_0\gamma(u)]\ge K(\bar y+j_0\delta)$. Therefore in either case, whenever $B_n(u,v)\ge n(\gamma(u)+4\delta)$, we must also have $\sum_{k=1}^KY_k\ge K(\bar y+j_0\delta)$. 
	
	Lemma  \ref{momentbounds} shows that the moment generating function $\psi$ of the $Y_k$ is finite in a neighbourhood of 0.  Standard large deviations estimates then give the existence of a $t>0$ such that $\P_p(\sum_{k=1}^KY_k\ge K(\bar y+j_0\delta))<e^{-tK}$ for all $K$ sufficiently large (i.e.~whenever $n\ge n_1$, for some choice of $n_1\ge n'_1$ that makes $K\ge j_0$ as well as $K\ge 2$).

	%It is well known (see for example Ex. 2.28 of \cite{RAS}) that the Legendre transform $\psi^*$ of $\psi$ is non-zero away from $\bar y$. So if $0<t<\psi^*(\bar y+J\delta)$ then Cram\'er's theorem implies that $\P_p(\sum_{k=1}^KY_k\ge K(\bar y+J\delta))<e^{-tK}$ for $K$ large (i.e.~whenever $n\ge n_1$, for some choice of $n_1\ge n'_1$). 
	
	Setting $\bar c=\frac{t}{2j_0}$ we use the bound $n\le j_0K + 2j_0= j_0(K+2)$ to conclude that $\bar c n\le\frac{ t(K+2)}{2}=tK(\frac12+\frac1K)\le tK$ (the latter since $K\ge 2$), and the conclusion of the Lemma holds.
\end{proof}

\begin{proof}[Proof of Lemma \ref{lemma:exponentialsecondbound}]
We will estimate $\Pp(A'_n(M))$ (where $M=\lfloor cn\rfloor$ for some $c>1$ to be chosen later) using the argument of Lemma \ref{SAWargument}.  See Figure \ref{fig:stepfour}. On the event $A'_n(M)$ we have a self-avoiding path $\omega$ (consistent with the environment) from $o$ to a point $x=j\vv+nu$ with $j<n\gamma(u)$ that reaches some point $y_0\in \Z^d$ with $y_0\cdot v\le -Mu\cdot v$.  Recall that $x^+=\sum_{i=1}^d \max\{x\cdot e_i,0\}$ and $x^-=-\sum_{i=1}^d \min\{x\cdot e_i,0\}$. The path takes $k+x^+$ steps using directions in $\mc{E}_+$ and $k+x^-$ steps using directions in $\mc{E}_-$, for some $k\ge 0$. For a given $k$ and $x$, the probability that such a path $\omega$ exists is at most $\theta^{\|x\|_1+2k}(1-p)^{k+x^-}$. 

Choose $0<\alpha<1$ and set $N=\lfloor n^\alpha\rfloor$. Set $\eta=0$ and assume $p>p_0(d)$. Lemma \ref{SAWargument} implies that there is an event $G$ of probability at most $c_{\ref{SAWargument}}\theta^{-Nd}$ such that off $G$ we have $x\in K_0-N\vv$. Therefore $x\cdot\vv \ge -Nd$, and so 
\begin{equation}
jd + n\|u\|_1\ge (j \vv+nu)\cdot\vv \ge -Nd.\label{elephant1}
\end{equation}
Let $n\ge n_0$, where $n_0$ is the constant in Lemma \ref{momentbounds}.  Then since $n^{-1}\beta_n(u)\le \|u\|_1$ for $n\ge n_0$ (by Lemma \ref{momentbounds}) we have that $j\le n\|u\|_1$. Choose $n'_0\ge n_0$ sufficiently large to make $Nd\le (d-1)n\|u\|_1$ when $n\ge n'_0$. Together with \eqref{elephant1} this gives
\begin{equation}
\label{boundonj}
\text{$|j|\le n\|u\|_1$ for $n\ge n'_0$.}
\end{equation}
It follows that for $n\ge n'_0$ we have 
\begin{equation}
\|x\|_1\le n\|u\|_1+|j|d\le n\|u\|_1(1+d).\label{xnorm}
\end{equation}  Therefore for fixed $k$ and $x$ as above, the probability that such a path $\omega$ exists is at most 
$\theta^{n\|u\|_1(1+d)+2k}(1-p)^k$.

It takes at least $\|y_0\|_1$ steps for the walk $\omega$ to reach $y_0\in H:=\{y\in \Z^d: y\cdot v\le -Mu\cdot v\}$.    But $\|y_0\|_1\cdot\|v\|_\infty\ge -y_0\cdot v\ge Mu\cdot v$. Therefore $\|y_0\|_1\ge M\sigma\|u\|_1$. From there $\omega$ must travel back and reach the half-space $H':=\{y': y'\cdot v\ge nu\cdot v\}$, and since $\|y'-y_0\|_1\|v\|_\infty\ge (y'-y_0)\cdot v\ge (n+M)u\cdot v$ for  $y'\in H'$, this takes at least $\sigma(n+M)\|u\|_1$ steps. It follows that the total length $2k+\|x\|_1$ of the path $\omega$ is at least $\sigma(n+2 M)\|u\|_1$.  Therefore $k\ge \frac{1}{2}(\sigma(n+2 M)\|u\|_1-\|x\|_1)$. For $k\ge n'_0$, \eqref{xnorm} implies that 
\begin{equation}
\label{lowerboundonk}
k\ge \frac{1}{2}\Big(\sigma(n+2 M)-n(1+d)\Big)\|u\|_1. 
\end{equation} 
Now choose $c>1$ so large that $\sigma(n+2\lfloor cn\rfloor )\ge n(2+d)$ for every $n\ge n'_0$, and let $M=\lfloor cn\rfloor $.  
By \eqref{lowerboundonk} (with $M=\lfloor cn\rfloor  $) we have $k\ge\frac{n\|u\|_1}{2}$ whenever $n\ge n'_0$. 

By \eqref{boundonj} there are at most $1+2n\|u\|_1$ possible choices for $j$ (and hence $x$), for any given $n$. Summing over $k$ and these $x$'s, and adding back the probability $\Pp(G)\le c_{\ref{SAWargument}}\theta^{-Nd}$, we obtain that
\begin{align*}
\Pp(A'_n(\lfloor cn\rfloor  ))&\le c_{\ref{SAWargument}}\theta^{-Nd} + (1+2n\|u\|_1)\theta^{n\|u\|_1(1+d)}\sum_{k\ge \frac{n\|u\|_1}{2}}[\theta^2(1-p)]^k\\
&\le c_{\ref{SAWargument}}\theta^{-Nd} + \frac{1+2n\|u\|_1}{1-\theta^2(1-p)}\theta^{n\|u\|_1(2+d)}(1-p)^{\frac{n\|u\|_1}{2}}
\end{align*}
for $n\ge n'_0$.  Recalling that $N=\lfloor n^\alpha\rfloor$, we see that these terms are summable for any $p>p_1$, provided $\theta^{2+d}(1-p_1)^{\frac12}<1$ and $p_1\ge p_0(d)$.  Note that  $p_1$ does not depend on $u$ and $v$, though $c$ may.  This proves 
%In fact, the remark at the beginning of Step 1, together with Lemma \ref{lem:lbondotproduct} shows that for the purpose of showing (a) we could have restricted attention to $u$ and $v$ for which $\sigma=1$. We do not do so here, to allow ourselves some flexibility later on.
 the result for $A'_n$.  The claims for $A''_n$ and $\hat{A}_n$ are proved similarly. See Figures \ref{fig:stepfour} and \ref{fig:Anhat}.
\end{proof}

\begin{proof}[Proof of Lemma \ref{lem:beta0}]
Let $p>p_0(d)$, and let $\epsilon>0$. Fix $c>1$ as in Lemma \ref{lemma:exponentialsecondbound}, and  let $M=\lfloor nc\rfloor$, $g=\lceil n(c\gamma(u)+2\epsilon)\rceil$, and $h=\lfloor n(\gamma(u)-3\epsilon)\rfloor$. Set $z=-g\vv-Mu$.  For $n \in \N$
	take 
	\begin{equation}	
	B^0_n=B^0_n(u,v)=\inf\{k: k\vv+nu\in\mc{C}_o[\Lambda_{u,v}(-M,n)]\}\label{B0n}
	\end{equation}
	 and 
	$$
	A_n=\{ B^0_n<n(\gamma(u)-3\epsilon)\}.
	$$
	We will show that
	\begin{align}
	\label{Anupperbound}
	\Pp(A_n)	&\le \Pp(o\notin \mc{C}_z[\Lambda_{u,v}(-M,n)])\\ 
	\label{Anupperbound2}
	&\qquad + \Pp(\exists\, k\vv+nu\in \mc{C}_z[\Lambda_{u,v}(-M,n)]\text{ with $k\le h$})\\ \nonumber
	&\to 0, \qquad \text{ as }n \to \infty.
	\end{align}
	See Figure \ref{fig:steptwo}. 
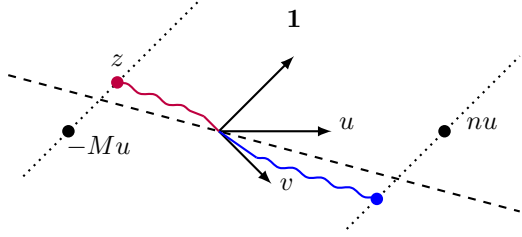
\begin{figure}
	\begin{center}
		\begin{tikzpicture}[thick,scale=1]
		\path[->,draw=black,>=latex] (0,0)--(1,1);
		\node at (1,1.5) {$\vv$};
		\path[->,draw=black,>=latex] (0,0)--(1.5,0);
		\node at (1.7,0.1) {$u$};
		\path[->,draw=black,>=latex] (0,0)--(0.7,-0.7);
		\node at (0.9,-0.7) {$v$};
		\node[circle,fill=black,scale=.5] at (-2,0) {};
		\node at (-1.6,-0.2) {$-Mu$};
		\node[circle,fill=black,scale=.5] at (3,0) {};
		\node at (3.5,0.1) {$nu$};
		\path[dotted,draw=black,>=latex] (-2.6,-.6)--(-0.2,1.8);
		\path[dotted,draw=black,>=latex] (1.7,-1.3)--(4,1);
		\path[dashed,draw=black,>=latex] (-2.8,.7467)--(4,-1.067);
		\path [draw=blue] (0,0)--(.5,-.35);
		\path [draw=blue,snake it] (.5,-.35) -- (2.1,-.9);
		\node[circle,fill=blue,scale=.5] at (2.1,-.9) {};
		\path [draw=purple] (0,0)--(-.2,.2);
		\path [draw=purple,snake it] (-1.35,.65) -- (-.2,.2);
		\node[circle,fill=purple,scale=.5] at (-1.35,.65) {};
		\node at (-1.35,0.95) {$z$};
		\end{tikzpicture}
	\end{center}
	\caption{A schematic illustrating the event $A_n$. The dashed line lies in direction $u+\gamma(u)\vv$. The squiggly (blue) curve on the right represents a path from $o$ to $nu+B_n^0\vv$ in $\Lambda_{u,v}(-M,n)$. $A_n$ is the event that this curve ends significantly below the dashed line, as shown in the figure.  The squiggly (purple) path on the left connects a point $z$ to $o$ through $\Lambda_{u,v}(-M,n)$ as in \eqref{Anupperbound}, where $z$ lies significantly above the dashed line. If both left and right paths exist, concatenating them yields a path as in \eqref{Anupperbound2}. }
		\label{fig:steptwo}
	\end{figure}
		To see the inequality, suppose that $A_n$ holds, so that there is a path from $o$ to some $k\vv+nu$ with $k\le h$ 
	that stays in $\Lambda_{u,v}(-M,n)$. Either there is no path from $z$ to $o$ 
	that stays in $\Lambda_{u,v}(-M,n)$, or there is such a path, in which case concatenating the two would give a path from $z$ to $k\vv+nu$.
	
	Next we must show that both terms \eqref{Anupperbound} and  \eqref{Anupperbound2} must $\to 0$. Consider \eqref{Anupperbound2}. By translation invariance it is equal to 
	\begin{align*}
	&\Pp(\exists\, k\vv+(M+n)u\in\mc{C}_o[\Lambda_{u,v}(0,M+n)]\text{ with $k\le g+h$})\\
	&=\Pp(g+h\ge B_{M+n}(u,v))\\
	&\le \Pp((M+n+1)\gamma(u) -n\epsilon +1\ge B_{M+n}(u,v))\\
	& \to 0, \quad \text{ as }n \to \infty,
	\end{align*}
	since 
	\[g+h\le n(c\gamma(u)+2\epsilon)+1+n(\gamma(u)-3\epsilon)\le (M+n+1)\gamma(u)-n\epsilon +1.\]

	Turning to \eqref{Anupperbound}, we have (as in the proof of Lemma \ref{momentbounds}) that if $n\ge n_0$, and $y=k\vv$ for some $k\le 0$, then $o\in\mc{C}_y[\Lambda_{u,v}(-M,n)]$. 
	Therefore translation invariance shows that for $n\ge n_0$,
	\begin{align*}
	&\Pp(o\in \mc{C}_z[\Lambda_{u,v}(-M,n)])
	\ge\Pp(\exists \,k\vv\in \mc{C}_z[\Lambda_{u,v}(-M,n)]\text{ with }k\le 0)\\
	&\qquad=\Pp(\exists\, k\vv+Mu\in \mc{C}_o[\Lambda_{u,v}(0,M+n)]\text{ with }k\le g)\\
	&\qquad\ge \Pp(\exists\, k\vv+Mu\in \mc{C}_o[\Lambda_{u,v}(0,M)]\text{ with }k\le g)\\
	&\qquad=\Pp(g\ge B_M(u,v)).
	\end{align*}
	Because $g \ge n(c\gamma(u)+\epsilon)\ge M\gamma(u)+n\epsilon$, this probability is at least 
	\[\Pp(M\gamma(u)+n\epsilon\ge B_M(u,v))\to 1, \quad \text{ as }n \to \infty.\]
	Therefore $\Pp(o\in \mc{C}_z[\Lambda_{u,v}(-M,n)])\to 1$.  This shows \eqref{Anupperbound}, and so $\Pp(A_n)\to 0$ as claimed. 
		
	We conclude from \eqref{Anupperbound}-\eqref{Anupperbound2} and Lemma \ref{lemma:exponentialsecondbound} that if $p>p_1$ then $\Pp(A_n\cup A'_n)\to 0$. Off this set, there can be no self avoiding path $\omega$ consistent with the environment, that stays in $\Lambda_{u,v}(-\infty,n)$, and which runs from $o$ to some point $k\vv+nu$ with $k<n(\gamma(u)-3\epsilon)$. For if $\omega$ stays in $\Lambda_{u,v}(-M,n)$ then $A_n$ would hold, and if it leaves $\Lambda_{u,v}(-M,n)$ then it enters $\Lambda_{u,v}(-\infty,-M)$ and $A'_n$ would hold. 
	
	We conclude that off $A_n\cup A'_n$ we have $\frac{\beta_n^0(u,v)}{n}\ge \gamma(u)-3\epsilon$. Combined with Lemma \ref{lem:B_n} and the fact that $\beta^0_n(u,v)\le B_n(u,v)$, this means that $\frac{\beta_n^0(u,v)}{n}\to \gamma(u)$ in probability.
	$L^1$ convergence now also follows because $\frac{\beta_n(u)}{n}\le \frac{\beta_n^0(u,v)}{n}\le\frac{B_n(u,v)}{n}$ and the latter $\to\gamma(u)$ in $L^1$, while  by Lemma \ref{lemma:exponentialsecondbound}, the former is bounded below by an integrable random variable.

	To extend the convergence in probability to almost surely convergence, we must carefully examine the proof of the version of the subadditive ergodic theorem given in \cite{Liggett}. 
	Define for $0< m<n$
	$$
	\tilde B_{m,n}(u,v)=\inf\{k: (\beta_m^0(u,v)+k)\vv+nu\in\mc{C}_{\beta^0_m(u,v)\vv+mu}[\Lambda_{u,v}(m,n)]\}.
	$$
	Examining \eqref{Bmn}, we see that $\tilde B_{m,n}(u,v)$ has the same distribution as $B_{m,n}(u,v)$.
         
         We also know that for $0< m<n$,
	$$
	\beta^0_n(u,v)\le \beta^0_m(u,v)+\tilde B_{m,n}(u,v). 
	$$
	The required moment bounds (for $n\ge n_0$) follow from Lemma \ref{momentbounds}, as in the proof of Lemma \ref{lem:B_n}.
	We know that the joint distribution of $(\tilde B_{m,m+k}(u,v); k\ge 1)$ does not depend on $m\ge 1$ (as in hypothesis (1.8) of \cite{Liggett}). We also know that for each $k\ge 1$, $(\tilde B_{nk,(n+1)k}(u,v); n\ge 1)$ is a stationary process (as in hypothesis (1.9) of \cite{Liggett}). What is not true is that $\beta^0_n(u,v)$ and $\tilde B_{m,m+n}(u,v)$ have the same law. 	 In other words hypothesis (1.8) of \cite{Liggett} does not apply in the present setting.
	
	To get around this, set $\gamma^0_n=\E_p[\beta^0_n(u,v)]$ and $\gamma_n=\E_p[B_n(u,v)]$.  From \eqref{Bsub} (and the fact that $\E_p[B_{m,n}(u,v)]=\E_p[B_{n-m}(u,v)]=\gamma_{n-m}$)  
	we know that $\gamma_n$ is a subadditive sequence, and therefore  $\lim\frac{\gamma_n}{n}=\inf\frac{\gamma_n}{n}=\gamma(u)$.  We also have $\gamma^0_n\le \gamma_n$, and $\gamma^0_{m+n}\le \gamma^0_m+\gamma_n$ for every $m$ and $n$.  Since also  $\lim\frac{\gamma_n^0}{n}=\gamma(u)$, we conclude that
	$$
	\gamma(u)=\lim_{n\to\infty}\frac{\gamma^0_n}{n}=\inf_{n\ge 1}\frac{\gamma_n}{n},
	$$
	which is a replacement for (2.1) of \cite{Liggett}. 
	
	It turns out that this is enough to make the rest of Liggett's proof work for us. In other words, once the above analogue to his (2.1) is shown, then the arguments for his (2.2), (2.3), and (2.4) all follow as written. From which we conclude that if $p>p_1$ then $\frac{\beta^0_n(u,v)}{n}\to\gamma(u)$ a.s.
	\end{proof}

\begin{proof}[Proof of Lemma \ref{lem:beta1}]
	Let $\epsilon>0$.  Let $c$ be as in Lemma \ref{lemma:exponentialsecondbound},  $\bar c$ be as in Lemma \ref{lemma:exponentialfirstbound}, and $c_{\ref{SAWargument}}=c_{\ref{SAWargument}}(0,d)$ be as in Lemma \ref{SAWargument}.  Take $\delta<\frac{\epsilon}{1+5c}$ and $M=\lfloor nc \rfloor$.  For $i\le n(\gamma(u)-\epsilon)$ let $w_i=i\vv+nu$. Define the event $C_i=\{\nexists\, k\le(M+n)(\gamma(u)-\delta)\text{ with }k\vv+(M+n)u\in\mc{C}_{w_i}[\Lambda_{u,v}(n,M+n)]\}$. By translation invariance, 
	\begin{align*}
	\P_p(C_i)&=\P_p\big(B_M(u,v)\ge (M+n)(\gamma(u)-\delta)-i\big)\\
	&\le \P_p(B_M
	(u,v)\ge M(\gamma(u)+4\delta))\le e^{-M\bar c},
	\end{align*}
	since 
	\begin{align*}
	(M+n)(\gamma(u)-\delta)-i
    &	\ge (M+n)(\gamma(u)-\delta)-n(\gamma(u)-\epsilon)\\
	&=M\gamma(u)-(M+n)\delta+n\epsilon\\
	&\ge M\gamma(u)+\delta(n(1+5c)-(M+n))\\
	&\ge M(\gamma(u)+4\delta).
	\end{align*}
	Define the event
	$$
	D_n=\{\beta^0_{M+n}(u,v)> (M+n)(\gamma(u)-\delta)\text{ but } \beta^1_n(u,v)\le n(\gamma(u)-\epsilon)\}.
	$$
	On $D_n$, there exists a path in $\mc{C}_o[\Lambda_{u,v}(-\infty, M+n)]$ from $o$ to some $i_0\vv+nu$ with $i_0\le n(\gamma(u)-\epsilon)$. Choose $0<\alpha<1$ and set $N=\lfloor n^\alpha\rfloor$. Then either $i_0< -N-\frac{n\|u\|_1}{d}$  or (by Lemmas  \ref{momentbounds} and \ref{lem:B_n}) $-N-\frac{n\|u\|_1}{d}\le i_0\le n(\gamma(u)-\epsilon)<n\gamma(u)\le n\|u\|_1$. 

	In the first case, $[(N+i_0)\vv+nu]\cdot\vv=(N+i_0)d+nu\cdot\vv<-n\|u\|_1+n\|u\|_1=0$, so $(N+i_0)\vv+nu\notin K_0$. In other words, $i_0\vv+nu\notin K_0-N\vv$, which by Lemma \ref{SAWargument} has probability at most $c_{\ref{SAWargument}}\theta^{-Nd}$.
	  
	There cannot be a path in $\Lambda_{u,v}(n,M+n)$ from $w_{i_0}$ to any $k\vv+(M+n)u$ with $k\le (M+n)(\gamma(u)-\delta)$, because concatenating the two paths would make 
	$\beta^0_{M+n}(u,v)\le (M+n)(\gamma(u)-\delta)$. Therefore 
	$$
	\Pp(D_n)\le c_{\ref{SAWargument}}\theta^{-Nd}+\sum_{i=-N-\frac{n\|u\|_1}{d}}^{n\|u\|_1}\Pp(C_i)\le c_{\ref{SAWargument}}\theta^{-Nd}+[1+N+n\|u\|_1(1+\frac1d)]e^{-M\bar c}.
	$$
	Summing over $n$ shows that $\sum_n \P_p(D_n)<\infty$. Lemma \ref{lem:beta0} tells us that if $p>p_1$ then $\beta^0_{M+n}(u,v)> (M+n)(\gamma(u)-\delta)$ eventually, and therefore $ \beta^1_n(u,v)\ge n(\gamma(u)-\epsilon)$ for all but finitely many $n$. In other words, $\frac{1}{n}\beta^1_n(u,v)\to \gamma(u)$ a.s. 
\end{proof}

\section{Proofs of Corollaries}
\label{sec:corollaries}

\begin{proof}[Proof of Corollary  \ref{cor:finestructure} ]
Fix $\delta>0$ and $R<\infty$. Set $\epsilon=\frac{\delta}{d}\land 1$ and $r=R+1$. Choose $n$ so large that the inclusions of (e) of Theorem \ref{shapetheorem} hold for $n$, with this $\epsilon$ and $r$. 

Let $z\in O_R\cap C_\delta\cap\frac1n\mathbb{Z}^d$. Then $\|\epsilon\vv\|_1\le\delta$, so by \eqref{eqn:lipschitzcondition} and the fact that $\gamma(z)\le -\delta$, we see that $\gamma(z')\le 0$, where $z'=z-\epsilon\vv$. Thus $z'\in C$. Since $z\in O_R$ and $\epsilon\le 1$, we know that $z'\in O_r$. So by choice of $n$, there is a $w\in \frac{1}{n}\mc{C}_o$ such that $\|w-z'\|_\infty<\epsilon$. Therefore $w'=\epsilon\vv - (w-z')$ lies in the positive orthant and we have $z=w+w'$. That implies that $o$ connects to $nw$ and $nw$ connects to $nz$, so $z\in\frac{1}{n}\mc{C}_o$, as claimed.
\end{proof}
We know that $\gamma(e_1)\le 0$. But it will be useful to know that this inequality is strict. 
\begin{lemma} Assume the conditions of Theorem \ref{shapetheorem}, and let $p>p_1$. Then $\gamma(e_1)<0$. 
\label{lem:e1strictnegativity}
\end{lemma}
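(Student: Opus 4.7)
The approach is to construct an explicit random path in $\mc{C}_o$ that reaches points of the form $n_M e_1 - M\vv$ with $M$ a positive fraction of $n_M$, and then combine this with the almost-sure convergence $\beta_n(e_1)/n \to \gamma(e_1)$ from Theorem \ref{shapetheorem}(a). We tacitly assume $p<1$, since $p=1$ gives $\gamma(e_1)=0$.

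Set $q := (1-p)^{d-1}>0$. For a site $x \in \Z^d$, call the \emph{dip at $x$} the sequence of steps $-e_2,-e_3,\dots,-e_d$ starting at $x$. This sequence is consistent with the environment precisely when each of the $d-1$ sites $x-\sum_{j=2}^m e_j$ for $m=0,1,\dots,d-2$ is a site at which the environment has all of $\mc{E}$ as outgoing arrows (i.e.\ the ``$-$'' type in the half-orthant model), an event of probability $q$. Starting from $y_0:=o$ on the ``highway'' at level $0$, we build a random path inductively: once we have reached $y_k=S_k e_1 - k(e_2+\dots+e_d)$, we proceed along the highway at level $-k$ by taking $+e_1$ steps (always available in the half-orthant model) until reaching the first site at which the dip succeeds, at which point we perform the dip and arrive at $y_{k+1}=y_k + T_{k+1}e_1 - (e_2+\dots+e_d)$. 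Here $T_{k+1}\ge 0$ is the number of $+e_1$ steps taken at level $-k$, and $S_{k+1}:=S_k+T_{k+1}$.

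The key point is that the lattice sites consulted when testing dips at different highway positions or different levels are pairwise disjoint (distinct highway positions give distinct first coordinates, and distinct levels give distinct patterns in the remaining coordinates), so $T_1,T_2,\dots$ are i.i.d.\ geometric random variables on $\{0,1,2,\dots\}$ with parameter $q$ and mean $(1-q)/q$. Writing $S_M=T_1+\dots+T_M$ and $n_M:=S_M+M$, direct computation gives $y_M=n_M e_1 - M\vv$, so $y_M\in\mc{C}_o$ yields $\beta_{n_M}(e_1)\le -M$. The strong law of large numbers then gives $S_M/M\to (1-q)/q$ a.s., hence $n_M\to\infty$ and $M/n_M\to q$ a.s. Combining this with Theorem \ref{shapetheorem}(a) along the (random) subsequence $n_M$ yields
\[
\gamma(e_1)=\lim_{M\to\infty}\frac{\beta_{n_M}(e_1)}{n_M}\le -\lim_{M\to\infty}\frac{M}{n_M}=-q<0.
\]

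The only step requiring real care is verifying the independence of the $T_k$'s, which reduces to checking that the finite sets of lattice sites examined at distinct dip-tests do not overlap; this is routine bookkeeping. Everything else is an immediate application of the law of large numbers and of the shape theorem already established.
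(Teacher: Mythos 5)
Your proof is correct. The overall strategy is the same as the paper's -- build an explicit environment-adapted path from $o$ that makes linear progress in the $-\vv$ direction per unit of progress in the $e_1$ direction, then feed this into the convergence $\beta_n(\cdot)/n\to\gamma(\cdot)$ already established in Theorem \ref{shapetheorem}(a) -- but the execution differs in two respects. First, the path: the paper follows $e_1$ at $\Omega_+$ sites and spends its first $n$ visits to $\Omega_-$ sites on $-e_2$ steps, the next $n$ on $-e_3$ steps, and so on, so that \emph{every} $\Omega_-$ site encountered is used; you instead wait for a run of $d-1$ suitably placed $\Omega_-$ sites before performing a full dip $-e_2,\dots,-e_d$. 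Your disjointness bookkeeping is right: the test sites at a fixed level all share their $e_d$-coordinate and are separated by their $e_1$-coordinates, while distinct levels have distinct $e_d$-coordinates, so the $T_k$ are indeed i.i.d.\ geometric and the path is consistent with the environment. Second, the conclusion: the paper shows $\Pp(\beta_n(ke_1-\vv)\le 0)\to 1$, deduces $\gamma(ke_1-\vv)\le 0$ from convergence in probability, and then applies the identities $\gamma(u+\vv)=\gamma(u)-1$ and $\gamma(ku)=k\gamma(u)$; you bound $\beta_{n_M}(e_1)\le -M$ deterministically along a random subsequence and invoke a.s.\ convergence together with the strong law. Your route has the virtue of the clean explicit bound $\gamma(e_1)\le-(1-p)^{d-1}$, whereas the paper's budget scheme is more economical with $\Omega_-$ sites and gives roughly $\gamma(e_1)\le-(1-p)/(d-1)$ for $p$ near $1$; either suffices for strict negativity. (Both arguments implicitly require $p<1$, which you flag correctly.)
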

\begin{proof}
Let $n,k>0$. Consider a path from $o$, constructed as follows. At sites in $\mc{E}_+$, follow $e_1$. At the first $n$ sites in $\mc{E}_-$, follow $-e_2$. At the next $n$ sites in $\mc{E}_-$, follow $-e_3$. Do the same in turn for $-e_4, \dots, -e_d$ till $n$ steps have been taken in the direction of each. After that, follow $e_1$ at sites in $\mc{E}_-$. 

This path will reach $n(ke_1-\vv)$ provided there are at least $n(d-1)$ sites in $\mc{E}_-$ among the first $n[(k-1)+d-1]$ sites visited. Choose $k$ so that $(1-p)[k+d-2]>d-1$. Then the law of large numbers implies that the above event has probability $\to 1$, when $n\to\infty$. 

In other words, $\beta_n(ke_1-\vv)\le 0$ with probability $\to 1$. Since $\frac1n\beta_n(ke_1-\vv)\to \gamma(ke_1-1)$ in probability, we get that $\gamma(ke_1-\vv)\le 0$. Therefore
$$
\gamma(e_1)=\frac1k\gamma(ke_1)=\frac1k[\gamma(ke_1-\vv)-1]\le-\frac1k<0.
$$
\end{proof}

\begin{proof}[Proof of Corollary \ref{cor:Lxlimits}  ]
Fix $w\in\Z^d$. Let $\zeta(w)=\inf\{t\in \R: \gamma(w+te_1)\le 0\}$.  If $s<t$ then 
$$
\gamma(w+te_1)\le \gamma(w+se_1)+\gamma((t-s)e_1)=\gamma(w+se_1)+(t-s)\gamma(e_1)<\gamma(w+se_1)
$$
by Lemma \ref{lem:e1strictnegativity}. So $\gamma$ is strictly decreasing and continuous, and therefore
\begin{align}
\gamma(w+te_1)\le 0&\Rightarrow t\ge \zeta(w)
\label{zetaineq1}\\\label{zetaineq2}
\gamma(w+te_1)\ge 0&\Rightarrow t\le \zeta(w).
\end{align}
Set $s=\liminf\frac{L_{nw}}{n}$ and $S=\limsup\frac{L_{nw}}{n}$ and let $M\in\Z$. We will show that $S\land M\le\zeta(w)\le s$, from which the desired result is immediate.

Consider the upper inequality first. Lemma \ref{SAWargument} implies that $s>-\infty$. There is nothing to show if $s=\infty$, so assume $s\in\R$. Then we may find a sequence $n_k$ such that $\frac{1}{n_k}L_{n_kw}\to s$. Therefore $n_kw+L_{n_kw}e_1\in\mc{C}_o$. Set $r=\|w\|_\infty + |s|+1<\infty$. Then for any $\epsilon>0$, (e) of Theorem \ref{shapetheorem} shows that for $k$ sufficiently large, 
$$
w+\frac{1}{n_k}L_{n_kw}e_1\in O_r\cap \frac{1}{n_k}\mc{C}_o\subset O_\epsilon+C.
$$
The latter is closed, so taking limits, we have $w+se_1\in O_\epsilon+C$ for every $\epsilon>0$. Since $C$ is closed, in fact $w+se_1\in C$, so $\gamma(w+se_1)\le 0$. By \eqref{zetaineq1} we get that $s\ge\zeta(w)$, as claimed.

To show the lower inequality, we argue similarly. If $S=-\infty$ there is nothing to show, so assume this is not the case. Let $\frac{1}{n_k}L_{n_kw}\to S$. Therefore $n_kw+\min(L_{n_kw}-1, n_kM)e_1\notin\mc{C}_o$. Set $R=\|w\|_\infty + |S\land M|+1<\infty$. Then for $k$ sufficiently large
$$
w+\frac{1}{n_k}\min(L_{n_kw}-1,n_kM)e_1\in [O_R\cap \frac{1}{n_k}\Z^d]\setminus \frac{1}{n_k}\mc{C}_o.
$$
If $\delta>0$, then for $k$ sufficiently large, Corollary \ref{cor:finestructure} implies that the above point $\notin C_\delta$. In other words, 
$\gamma(w+\frac{1}{n_k}\min(L_{n_kw}-1,n_kM)e_1)\ge -\delta$. Taking limits, we get that $\gamma(w+(S\land M)e_1)\ge -\delta$. Since $\delta>0$ was arbitrary, in fact $\gamma(w+(S\land M)e_1)\ge 0$.  By \eqref{zetaineq2} we obtain that $S\land M\le\zeta(w)$ as claimed, and we are done. \end{proof}

\bibliographystyle{plain}

\end{document}